\newcommand{\bbar}{\begin{pmatrix}}
\newcommand{\ebar}{\end{pmatrix}}
\newcommand{\uu}{{\Lambda G_\sigma}}
\newcommand{\bdm}{\begin{displaymath}}
\newcommand{\edm}{\end{displaymath}}
\newcommand{\beq}{\begin{equation}}
\newcommand{\beqa}{\begin{eqnarray}}
\newcommand{\beqas}{\begin{eqnarray*}}
\newcommand{\eeq}{\end{equation}}
\newcommand{\eeqa}{\end{eqnarray}}
\newcommand{\eeqas}{\end{eqnarray*}}
\newcommand{\dd}{\textup{d}}
\newcommand{\E}{{\mathbb E}}
\newcommand{\R}{{\mathbb R}}
\newcommand{\C}{{\mathbb C}}
\newcommand{\real}{{\mathbb R}}
\newcommand{\SSS}{{\mathbb S}}
\newcommand{\uhat}{\Lambda_P^+ G^\C_\sigma}
\newcommand{\sym}{\mathcal{S}}
\newcommand{\ustar}{\Lambda_P^+ G^\C_\sigma}
\newcommand{\uc}{\Lambda G^\C_\sigma}
   \newtheorem{theorem}{Theorem}[section]
   \newtheorem{lemma}[theorem]{Lemma}
   \newtheorem{defn}[theorem]{Definition}
   \newtheorem{problem}{Problem}
 \theoremstyle{remark}
   \newtheorem{example}[theorem]{Example}
\numberwithin{equation}{section}
\begin{document}
\

\title[The Bj\"orling problem for CMC surfaces]{The Bj\"orling problem for non-minimal constant mean curvature surfaces}

\author{David Brander}
\address{Department of Mathematics\\ Matematiktorvet, Building 303 S\\
Technical University of Denmark\\
DK-2800 Kgs. Lyngby\\ Denmark}
\email{D.Brander@mat.dtu.dk}

\author{Josef F. Dorfmeister}
\address{TU M\"unchen\\ Zentrum Mathematik (M8), Boltzmannstr. 3\\
  85748, Garching\\ Germany}
\email{dorfm@ma.tum.de}

\begin{abstract}
The classical Bj\"orling problem  is to find the minimal
surface containing a given real analytic curve with tangent planes prescribed
along the curve.  We consider the generalization of this problem to non-minimal constant mean curvature
(CMC) surfaces, and show that it can be solved via the loop group formulation for
such surfaces.  The main result gives a way to compute the holomorphic potential
for the solution directly from the Bj\"orling data,
 using only elementary differentiation, integration and holomorphic extensions of
real analytic functions. Combined with an Iwasawa decomposition of the loop group,
this gives the solution, in analogue to Schwarz's formula for the minimal case.
Some preliminary examples of applications to the construction of CMC surfaces with special properties are given.
\end{abstract}

\keywords{differential geometry, surface theory, loop groups, integrable systems, Bj\"orling problem, geometric Cauchy problem}

\subjclass[2000]{Primary 53C42, 53A10; Secondary 53A05}


\maketitle


\section{Introduction}
In this article we consider the following:
\begin{problem} \label{problem1}
 Let $J \subset \real$ be an open interval. Let $f_0: J \to \E^3$ be a regular real analytic curve, with tangent vector field $f_0^\prime$. Let  $v:J \to \E^3$ be
 a non-vanishing analytic vector field along $f_0$
  such that the inner product $\left<v, f_0^\prime \right>= 0$ along the curve. Let $H$ be a non-zero real number.   Find all conformal
  constant mean curvature (CMC) $H$ immersions, $f: \Sigma \to E^3$,
 where $\Sigma$ is some open subset of $\C$ containing $J$,
 such that the restriction $f|_J$ coincides with $f_0$,
 and such that the tangent planes to the immersion 
 along $f_0$ are spanned by $v$ and $f_0^\prime$.
\end{problem}
This generalizes, to the case $H\neq0$, \emph{Bj\"orling's problem} for minimal surfaces,
proposed by E.G. Bj\"orling in 1844 and solved by H.A. Schwarz in 1890. See, for example, \cite{dhkw}. 

In the minimal case, there is a simple formula for the surface involving nothing
but integrals and analytic continuation of the initial data: specifically, if $f_0(x)$
is as above, and $n(x)$ is the unit normal to the prescribed family of tangent planes,
then, letting $\check n(z)$ and $\check f_0(z)$ be the analytic extensions of these
functions away from the curve, Schwarz's formula for the unique solution to the
Bj\"orling problem is 
\beq  \label{schwarz}
f(x,y) = \Re \left\{ \check f_0(z) - i \int_{x_0}^z \check n(w) \wedge \dd \check f_0(w) \right\}.
\eeq
The Schwarz formula has been an important tool in the study of minimal surfaces:
as a means to prove general facts about the surfaces,  such as the fact that if a minimal surface intersects
a plane perpendicularly, then this plane is a plane of symmetry of the surface.
It has been especially useful for constructing explicit examples of minimal surfaces;
for example, with certain symmetries (see \cite{dhkw}). For recent examples of applications to global problems of minimal surfaces
see \cite{galvezmira2004} and \cite{mira2006}.

The existence of the Schwarz formula is connected to the 
fact that the Gauss map of a minimal surface is holomorphic, and
to the Weierstrass representation for minimal surfaces.
Moreover, the Bj\"orling problem has also been studied in some other situations,
for example in works by G\'alvez, Mira and collaborators \cite{gm2005, gm2005-2, galvezetal2007},
where it is called the \emph{geometric Cauchy problem}.
 These geometric problems all have Weierstrass representations
in terms of holomorphic data, analogous to the case of minimal surfaces.

On the other hand, when $H\neq 0$ the Gauss map is merely harmonic, rather than holomorphic.

\subsection{Results of this article}
\subsubsection{Solution of the Bj\"orling problem}
In this article we make use of the so-called generalized 
Weierstrass representation for CMC surfaces of Dorfmeister, Pedit and Wu 
\cite{DorPW}, to show that the Bj\"orling problem can be solved 
for non-minimal CMC surfaces. The main result is Theorem \ref{mainthm}
which gives a method for computing the
 (unique) solution in terms of 
the given data $f_0$ and $v$, using  elementary integration and
 differentiation, 
analytic continuation of real functions, and an Iwasawa factorization
of the loop group. In fact our construction is highly analogous to the
Schwarz formula given above:  we take the loop group extended frame for the Gauss map determined
by the family of planes along the curve, extend this holomorphically away from
the curve, and then apply an Iwasawa decomposition of the complex loop group,
to obtain the ``real part" of the complexified frame.  This turns out to be an
 extended 
frame for the Gauss map of the desired surface, and the Sym-Bobenko formula retrieves
the CMC surface.  

A point that is perhaps not obvious in the procedure just described is the fact that the
loop group frame, which is defined in terms of the Hopf differential and the metric
of the surface (not the Gauss map alone), can actually be constructed given only the Bj\"orling data on a curve.

As a side-benefit,  Theorem \ref{mainthm}  also gives a new way to compute the
holomorphic data, a so-called \emph{holomorphic potential}, which
determine a given CMC surface, using nothing but analytic continuation
and integration. We call this new holomorphic potential a \emph{boundary potential},
because it is  an analytic extension of the Maurer-Cartan form of the (real) 
extended frame for the surface in question, which is computed
along a curve from the Bj\"orling data. 

 The construction of this boundary potential differs in an important respect
  from a previous method given by Wu 
\cite{wu1999} for finding the \emph{normalized} meromorphic potential 
for a CMC surface, because Wu's formula uses the holomorphic
part of the function $u$ - where the metric is given by $4e^{2u}(dx^2 + dy^2)$ -
 and this cannot be determined directly 
from the Bj\"orling data along a curve.

\subsubsection{Two-parameter families of CMC surfaces}
A new feature for the Bj\"orling problem  arises when one considers the 
non-minimal case: namely,  one now has the constant $H$ entering into the
construction, which can take any non-zero real value, and hence can be 
thought of as a parameter. Thus, in the 
non-minimal case, the solution to the Bj\"orling problem actually
gives a family of CMC surfaces through the given curve, varying continuously
with $H$.  In Example \ref{delaunayexample}, by taking the curve to be the
unit circle in the $x_1x_2$-plane and $v$ to point in the $x_3$ direction,
we obtain a family of potentials, representing a deformation of a sphere (minus two
points, strictly speaking) through
the Delaunay surfaces and the cylinder, all containing the same circle.

This leads to a corollary, Theorem \ref{famthm} which says that, given a CMC 1
surface $f$ and some choice of conformal parameterization, with metric
$4e^{2u}(\dd x^2 + \dd y^2)$, there is a 
continuous 1-parameter family
of CMC 1 surfaces $f^t$, where $t \in \R$, such that $f^1 = f$, 
and the Hopf
differential of $f^t$ is $Q_t \dd z^2= (2(1-t) e^{2\check u_0} + Q) \dd z^2$,
where $\check u_0$ is a holomorphic extension of 
$u|_J$, and $J$ is a given curve in the parameter domain. This  family  is different from
that  associated to the loop parameter, which scales the Hopf differential
by a complex constant.

\subsubsection{Applications to boundary value problems and to the construction of
CMC surfaces with symmetries}  
The experience of the use of Schwarz's formula in the study of minimal surfaces
indicates that Theorem \ref{mainthm} should be a useful tool for the non-minimal
case. We  consider some preliminary examples in  Section \ref{applications}.

Concerning boundary value problems, the problem of finding a CMC surface with a given boundary curve $\gamma$, can
always be formulated as a Bj\"orling problem, provided that $\gamma$ is a
real analytic curve.  This is because every CMC surface admits a conformal parameterization,
and, if the boundary is real analytic, the surface can be 
extended over the boundary \cite{muller2002}.   Now given such a curve $f_0$,
one can consider all possible vector fields $v$ for the Bj\"orling problem, and 
thus find expressions for the boundary potentials of all possible solutions.
In Section \ref{applications} we consider the simplest case of an open curve, 
and characterize all CMC surfaces which contain a straight line, the $x_1$-axis,
in terms of the angle made between the normal to the surface along the line
and the $x_3$ axis. Theorem \ref{linethm}
 gives the formula for the boundary potential for such a 
surface; we  then use the software CMCLab \cite{cmclab} to exhibit examples with
particular properties, such as the first two surfaces shown in Figure \ref{colorfig}.

\begin{figure}  
\begin{center}
\includegraphics[height=30mm]{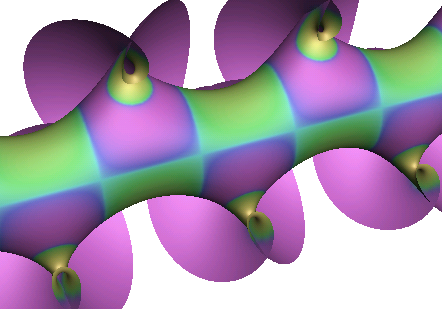} \hspace{2mm}
\includegraphics[height=30mm]{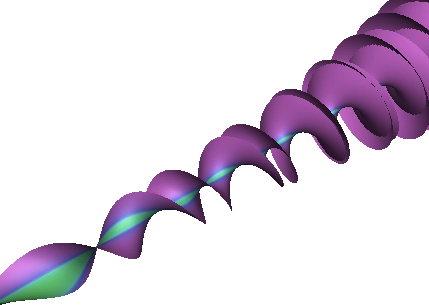} \hspace{2mm}
\includegraphics[height=30mm]{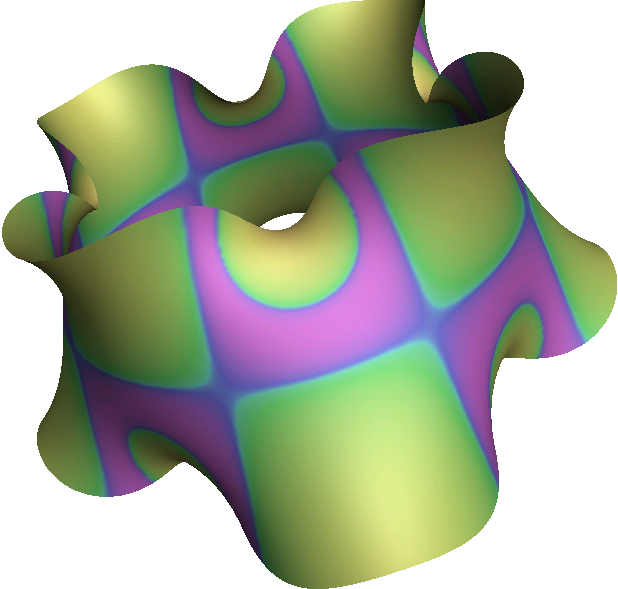}
\end{center}
\caption{Two conformally immersed non-minimal CMC surfaces which contain an entire straight line,
and one which contains a planar circle. The surfaces are  colored according to
 Gaussian curvature.}
\label{colorfig}
\end{figure}

The case that the boundary curve is closed, although  more complicated in general,
 can also be studied using this approach. In the final section,  we construct some examples of surfaces
 which contain a planar circle (Figures \ref{colorfig} right, \ref{circlefig1},
 \ref{circlefig2} and \ref{circlefig3}).



\section{The loop group formulation and DPW method}  \label{dpwsection}
In this section we summarize standard facts about CMC surfaces and their 
construction via integrable systems methods.
The loop group formulation for CMC surfaces in Euclidean space
${\mathbb E}^3$ evolved
from the work of Sym \cite{sym1985}, Pinkall 
and Sterling \cite{pinkallsterling},
and Bobenko \cite{bobenko, bobenko1994}.  
The Sym-Bobenko formula for CMC surfaces was given by 
Bobenko \cite{Bob:cmc, bobenko1994},
similar to the formula for constant negative Gauss surfaces of Sym \cite{sym1985}.  The DPW method for constructing all CMC surfaces from holomorphic data is due to Dorfmeister, Pedit and Wu \cite{DorPW}.  We give only an outline
of the DPW construction here, without reference to its more general purpose as a 
method for constructing pluriharmonic maps into symmetric spaces.

\subsection{The $SU(2)$-frame for a conformal immersion}\label{su2framesect}
It is known that every CMC surface admits a conformal parameterization.
Therefore, we first describe a standard $SU(2)$ frame for a conformally
parameterized surface.
For the Lie algebra $\mathfrak{su}(2)$, we work with the basis
\bdm
e_1 = \bbar 0 & -i \\ -i & 0 \ebar, \hspace{1cm}
e_2 = \bbar 0 & 1 \\ -1 & 0 \ebar, \hspace{1cm}
e_3 =  \bbar i & 0 \\ 0 & -i \ebar.
\edm
We identify  Euclidean 3-space $\E^3$ with $\mathfrak{su}(2)$, with inner
product given by $\langle X,Y \rangle = - \tfrac{1}{2} \text{trace} (XY)$,
giving the orthonormal relations
$ \langle e_i,e_j \rangle = \delta_{ij}$.
 
Let $\Sigma$ be a connected Riemann surface, and suppose $f: \Sigma \to \E^3$ is
a conformal immersion with (not necessarily constant) mean
curvature $H$. 
Conformality means we can choose  coordinates $z = x + iy$ and define
a function $u: \Sigma \to \real$ such that the metric is given by
\begin{equation}\label{eqn:dssquared}
\dd s^2 = 4e^{2u}(\dd x^2 + \dd y^2).
\end{equation}
A frame $F: \Sigma \to SU(2)$ is uniquely determined up to multiplication 
by $\pm 1$ by the conditions
\beq \label{framedef}
F e_1 F^{-1} = \frac{f_x}{|f_x|}, \hspace{1cm} 
F e_2 F^{-1} = \frac{f_y}{|f_y|}.
\eeq
The sign ambiguity is removed by choosing
 coordinates for   $\E^3$ so that
the frame $F$ satisfies $F(z_0) = I$, for some fixed point $z_0$.

A choice of unit normal vector is given by
$N = F e_3 F^{-1}$.  
The Hopf differential is defined to be $Q \dd z^2$, where
\bdm
Q:= \langle N,f_{zz} \rangle.
\edm 

The Maurer-Cartan form, $\alpha$, for the frame $F$ is defined by
\bdm
\alpha := F^{-1} \dd F = U \dd z + V \dd \bar{z}.
\edm

The mean curvature is 
 $H = \frac{1}{8}e^{-2u}\langle f_{xx} + f_{yy}, N \rangle$, 
and we have the formulae: $f_{zz} = 2u_z f_z + QN$, 
$f_{\bar{z} \bar{z}} = 2u_{\bar{z}}f_{\bar{z}} + \bar{Q}N$, 
$f_{z \bar{z}} = 2 H e^{2u}N$, and  
 \begin{equation}  \label{fzandfzbar}
f_z = -2i e^u F \cdot \bbar  0 & 1 \\ 0 & 0 \ebar
\cdot F^{-1}, \hspace{1cm}
f_{\bar z} = -2i e^u F \cdot \bbar  0 & 0 \\ 1 & 0 \ebar
\cdot F^{-1}. \end{equation} 
Differentiating these, one obtains  the following
\begin{lemma}  \label{withoutlambda}
The connection coefficients $U := F^{-1}F_z$ and 
$V := F^{-1}F_{\bar{z}}$ are given by 
\begin{equation}\label{UhatandVhat}
U = \frac{1}{2} \begin{pmatrix} u_z & -2 H e^u \\ 
                                   Q e^{-u}  & -u_z \end{pmatrix} 
, \hspace{1cm} V = \frac{1}{2} \begin{pmatrix} -u_{\bar z} & - \bar Q e^{-u}\\ 
                            2  H e^u & u_{\bar z} \end{pmatrix} 
. \end{equation}
The compatibility condition $\dd \alpha + \alpha \wedge \alpha = 0$
is equivalent to the pair of equations
\beqa  \label{compat}
\, && u_{z \bar z} + H^2 e^{2u} - \tfrac{1}{4}  |Q|^2 e^{-2u} = 0,
\label{compatibility1} \\
\, && Q_{\bar z} = 2 e^{2 u} H_z. \nonumber 
\eeqa
\end{lemma}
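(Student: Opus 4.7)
The plan is to determine the entries of $U$ (and hence $V$, since $F \in SU(2)$ forces the $\mathfrak{su}(2)$-reality condition $V = -U^*$) by differentiating the two formulas in \eqref{fzandfzbar} and matching with the standard second-derivative identities $f_{zz} = 2u_z f_z + QN$ and $f_{z\bar z} = 2He^{2u} N$, and then to read off the compatibility condition by a direct matrix calculation. Write $U = \bigl(\begin{smallmatrix} a & b \\ c & -a \end{smallmatrix}\bigr)$ with $a,b,c \in \C$; the task is to identify $a, b, c$.

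First, set $E_+ = \bigl(\begin{smallmatrix} 0 & 1 \\ 0 & 0 \end{smallmatrix}\bigr)$, so \eqref{fzandfzbar} reads $f_z = -2ie^u F E_+ F^{-1}$. Differentiating in $z$ and using $F_z = FU$ and $(F^{-1})_z = -UF^{-1}$ gives
\[
f_{zz} = -2ie^u F\bigl(u_z E_+ + [U, E_+]\bigr) F^{-1}.
\]
Matching this against $f_{zz} = 2u_z f_z + Q N = F\bigl(-4ie^u u_z E_+ + Q e_3\bigr) F^{-1}$ and computing the commutator $[U,E_+] = \bigl(\begin{smallmatrix} -c & 2a \\ 0 & c \end{smallmatrix}\bigr)$ yields two independent scalar equations, from which one reads off $a = u_z/2$ and $c = Qe^{-u}/2$. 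Repeating the same process by differentiating $f_z$ in $\bar z$ and comparing with $f_{z\bar z} = 2He^{2u} N$ determines the remaining entry: writing $V = -U^*$, the analogous commutator identity forces $b = -He^u$. This produces the stated formulas for both $U$ and $V$.

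For the compatibility condition, note that $\alpha = U\,dz + V\,d\bar z$ gives
\[
d\alpha + \alpha \wedge \alpha = \bigl(V_z - U_{\bar z} + [U,V]\bigr) dz \wedge d\bar z,
\]
so $d\alpha + \alpha \wedge \alpha = 0$ is equivalent to the single matrix equation $V_z - U_{\bar z} + [U,V] = 0$. Substituting the explicit forms of $U$ and $V$ and expanding entry-by-entry, the $(1,1)$ entry collapses to $-u_{z\bar z} - H^2 e^{2u} + \tfrac{1}{4}|Q|^2 e^{-2u} = 0$ (the Gauss equation), while the $(2,1)$ entry simplifies, after cancellation of the $He^u u_z$ terms, to $H_z e^u - \tfrac{1}{2} Q_{\bar z} e^{-u} = 0$, i.e.\ $Q_{\bar z} = 2e^{2u} H_z$ (Codazzi); the $(2,2)$ and $(1,2)$ entries give the complex conjugates of these two relations and so are redundant.

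The only conceptual point is keeping straight which third-derivative identity supplies which entry of $U$: the holomorphic derivative $f_{zz}$ sees the metric and the Hopf differential but not $H$, so it pins down only the diagonal and the lower-triangular part, while the mixed derivative $f_{z\bar z}$ is where the mean curvature enters and fixes the upper-triangular entry. Beyond this organizational remark, everything is routine bookkeeping with $2 \times 2$ matrices, and there is no genuine obstacle to overcome.
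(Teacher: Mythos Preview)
Your proposal is correct and follows exactly the route the paper indicates: the paper's entire proof is the one-line remark ``Differentiating these, one obtains the following'', referring to the formulas \eqref{fzandfzbar}, and you have carried out precisely that differentiation (using $f_{zz}$ and $f_{z\bar z}$ together with the reality condition $V=-U^*$) and then expanded the Maurer--Cartan equation entrywise. There is nothing to add; your write-up simply supplies the details the paper leaves to the reader.
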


\subsection{CMC surfaces, the loop group and the Sym-Bobenko formula} \label{symformsection}
Now suppose we insert a parameter $\lambda$ into the $1$-form $\alpha$, defining 
the family
$\hat \alpha := \hat U  \dd z + \hat V \dd \bar{z}$, where
\begin{equation}\label{withlambda}
\hat U = \frac{1}{2} \begin{pmatrix} u_z & -2 H e^u \lambda^{-1} \\ 
                                   Q e^{-u} \lambda^{-1}  & -u_z \end{pmatrix} 
, \hspace{1cm} \hat V = \frac{1}{2} \begin{pmatrix} -u_{\bar z} & - \bar Q e^{-u} \lambda\\ 
                            2  H e^u \lambda & u_{\bar z}  \end{pmatrix} 
 . \end{equation}

The loop group characterization for CMC surfaces is contained in the
following fact, which is quickly verified by adding $\lambda$ at
the appropriate places in the compatibility conditions (\ref{compat}) above: 
\begin{lemma}
The $1$-form $\hat \alpha$ satisfies the Maurer-Cartan equation
\bdm
\dd \hat \alpha + \hat \alpha \wedge \hat \alpha = 0
\edm
 for all $\lambda \in \C \setminus \{ 0 \}$ 
if and only if the following two conditions both hold:
\begin{enumerate}
\item
$\left[ \, \dd \hat \alpha + \hat \alpha \wedge \hat \alpha \, \right]_{\lambda=1} = 0$,
\item
the mean curvature $H$ is constant.
\end{enumerate}
\end{lemma}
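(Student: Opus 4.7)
The plan is to treat $d\hat\alpha + \hat\alpha \wedge \hat\alpha$ as a Laurent polynomial in $\lambda$ and match coefficients. Write $\hat U = A_0 + \lambda^{-1} A_1$ and $\hat V = B_0 + \lambda B_1$, where $A_0, B_0$ are the diagonal parts (carrying only $u_z, u_{\bar z}$ and hence $\lambda$-free) while $A_1, B_1$ are the strictly off-diagonal parts (carrying $H, Q, \bar Q$). Using that $d\hat\alpha + \hat\alpha \wedge \hat\alpha = (\partial_z \hat V - \partial_{\bar z} \hat U + [\hat U, \hat V])\, dz \wedge d\bar z$, expansion in $\lambda$ yields the form $\lambda^{-1} M_{-1} + M_0 + \lambda M_1$, with $M_{-1} = -\partial_{\bar z} A_1 + [A_1, B_0]$, $M_1 = \partial_z B_1 + [A_0, B_1]$, and $M_0 = \partial_z B_0 - \partial_{\bar z} A_0 + [A_1, B_1]$; the bracket $[A_0, B_0]$ vanishes since both are diagonal. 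Hence the Maurer--Cartan equation holds for every $\lambda \in \C \setminus \{0\}$ if and only if $M_{-1} = M_0 = M_1 = 0$.

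A short direct computation from the explicit formulas for $A_i, B_i$ shows that $M_{-1}$ is off-diagonal with entries proportional to $H_{\bar z}$ and $Q_{\bar z}$, and $M_1$ is off-diagonal with the conjugate entries involving $H_z$ and $\bar Q_z$. The $\lambda^0$ coefficient $M_0$ is diagonal, and evaluating $[A_1, B_1]$ together with $\partial_z B_0 - \partial_{\bar z} A_0$ shows that $M_0 = 0$ is exactly the first equation of (\ref{compat}), namely $u_{z\bar z} + H^2 e^{2u} - \tfrac{1}{4}|Q|^2 e^{-2u} = 0$. Summing $M_{-1} + M_0 + M_1$ and setting it to zero at $\lambda = 1$ recovers both equations of (\ref{compat}), i.e.\ condition (1).

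The equivalence now follows. For the forward direction, if the Maurer--Cartan equation holds for all $\lambda$, then $M_{\pm 1} = 0$ forces $H_z = H_{\bar z} = 0$, giving (2), while evaluation at $\lambda = 1$ delivers (1). Conversely, assume (1) and (2). Constancy of $H$ gives $H_z = H_{\bar z} = 0$, and substituting this into the off-diagonal part of (1), i.e.\ the second equation of (\ref{compat}), yields $Q_{\bar z} = 0$ and its conjugate $\bar Q_z = 0$; thus $M_{\pm 1} = 0$. The remaining equation $M_0 = 0$ is precisely the diagonal part of (1). Hence all three Laurent coefficients vanish, and the Maurer--Cartan equation holds for every $\lambda$. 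The calculation is mechanical; the only subtlety is the clean diagonal/off-diagonal split of $A_i, B_i$, which is what makes the coefficients $M_k$ decouple and lets us extract the two conditions separately.
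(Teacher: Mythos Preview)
Your proof is correct and is exactly the routine verification the paper has in mind: the paper does not give a detailed argument but simply says the lemma ``is quickly verified by adding $\lambda$ at the appropriate places in the compatibility conditions (\ref{compat}),'' and your Laurent-coefficient matching carries this out. The diagonal/off-diagonal split you exploit is precisely what the paper's twisted loop-algebra setup encodes, so your approach and the paper's are the same.
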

Note that it also follows from (\ref{compat}) that the Hopf differential
for a CMC surface is holomorphic.

Now 
 $\hat \alpha$ is a 1-form with values in the Lie algebra $Lie(\uu)$,
where $\uu$ is the loop group of maps from the unit circle $\SSS^1$  into
$G=SU(2)$, with a twisting condition that amounts to diagonal
and  off-diagonal components being respectively even and odd functions
in the $\SSS^1$-parameter $\lambda$.  The condition that the Maurer-Cartan
equation is satisfied for all $\lambda$ means that $\hat \alpha (\,\cdot \, ,\lambda)$
can be integrated for every $\lambda$ to obtain a map 
 $\hat F: \widetilde{\Sigma} \to \uu$ from the universal cover of
$\Sigma$ into $\uu$.

\begin{defn} \label{extendedframedef}
The map $\hat F: \widetilde {\Sigma} \to \uu$ obtained by integrating the 
above  1-form $\hat \alpha$, with the initial condition
 $\hat F(z_0) = I$,
 is called an \emph{extended frame}
for the CMC surface $f$.
\end{defn}

Note that $\hat F |_{\lambda=1}:  \widetilde {\Sigma} \to SU(2)$ coincides with the original frame $F$.

If $H$ is any nonzero real constant, the Sym-Bobenko formula, at $\lambda \in \SSS^1$,
 is given by:
\beq \label{symformula}
 \sym_{\lambda}(\hat F) :=  -\frac{1}{2H}  \left( \, \hat F e_3 \hat F^{-1} + 2 i \lambda 
\partial_\lambda \hat F \cdot \hat F^{-1} \, \right). 
\eeq
Note: Setting $\lambda = e^{it}$, we have $\frac{\partial}{\partial t} = 
i \lambda \frac{\partial}{\partial \lambda}$.  Hence  
$ \sym_{\lambda}(\hat F)$  takes 
values in  $\mathfrak{su}(2) = \E^3$.


\begin{theorem} \cite{Bob:cmc, bobenko1994} \label{symthm} $\,$ 
\begin{enumerate}
\item \label{symthm1} 
Given  a CMC $H$ surface, $f$, with  extended frame $\hat F: \widetilde \Sigma \to \uu$,
described above, the original surface $f$ is recovered by the formula
\beq  \label{normalizedsym}
f(x,y) =  \mathcal{S}_1 (\hat F(z))
  -\mathcal{S}_1(\hat F(z_0))
  +  f(z_0).
\eeq
For other values  
$\lambda_0 \in \SSS^1$,
 $\sym_{\lambda_0} (\hat F): \widetilde \Sigma \to \E^3$ is also a CMC $H$ surface 
in $\E^3$,
with 
Hopf differential given by $\lambda_0^{-2} Q$.

\item \label{symthm2}
Conversely, given a map $\hat F: \widetilde \Sigma \to \uu$, the Maurer-Cartan
of which has 
coefficients of the form given by (\ref{withlambda}), the map
 $\sym_{\lambda_0} (\hat F): \widetilde \Sigma \to \E^3$
obtained by the Sym-Bobenko formula is a CMC $H$ immersion into $\E^3$.
\item \label{symthm3}
If $D(z)$ is any diagonal matrix valued function, constant in $\lambda$, then
$\sym_\lambda(\hat F D) = \sym_\lambda(\hat F)$.
\end{enumerate}
\end{theorem}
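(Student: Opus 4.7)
The plan is to prove (3) first by direct computation, then establish (2), from which (1) follows by applying the formula to the extended frame of the given surface $f$. For (3), any diagonal matrix $D$ (constant in $\lambda$) commutes with $e_3$, so $(\hat F D) e_3 (\hat F D)^{-1} = \hat F e_3 \hat F^{-1}$; and since $\partial_\lambda D = 0$, we get $\partial_\lambda(\hat F D)(\hat F D)^{-1} = \partial_\lambda \hat F \cdot \hat F^{-1}$. Both terms in $\sym_\lambda$ are therefore invariant under $\hat F \mapsto \hat F D$.

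For (2), set $g = \sym_{\lambda_0}(\hat F)$ and differentiate the Sym-Bobenko formula using $\hat F_z = \hat F \hat U$. The cross terms involving $\hat U$ and $\partial_\lambda \hat F$ cancel between the two pieces, leaving
\[
g_z \;=\; -\tfrac{1}{2H}\, \hat F \left( [\hat U, e_3] + 2i\lambda \,\partial_\lambda \hat U\right) \hat F^{-1} \Big|_{\lambda = \lambda_0}.
\]
Inserting the explicit form of $\hat U$ from \eqref{withlambda}, a short calculation (the bracket kills the diagonal and doubles the off-diagonal; $2i\lambda\partial_\lambda$ kills the lower entry and doubles the upper entry) gives the key identity
\[
[\hat U, e_3] + 2i\lambda \,\partial_\lambda \hat U \;=\; 4iHe^u \lambda^{-1}\bbar 0 & 1 \\ 0 & 0 \ebar,
\]
so that
\[
g_z \;=\; -2i e^u \lambda_0^{-1}\, \hat F(\cdot,\lambda_0) \bbar 0 & 1 \\ 0 & 0 \ebar \hat F(\cdot,\lambda_0)^{-1}.
\]
An analogous computation for $\partial_{\bar z} g$, using $\hat V$ in place of $\hat U$, produces the corresponding lower-triangular nilpotent with a factor $\lambda_0$. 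Conformality then follows from $\langle g_z, g_z\rangle = -\tfrac12\text{trace}(g_z^2) = 0$ since the strict upper triangular matrix is nilpotent. Taking $N := \hat F(\cdot,\lambda_0) e_3 \hat F(\cdot,\lambda_0)^{-1}$ as the unit normal, differentiating once more and using $\hat F_z = \hat F \hat U$ identifies $\langle N, g_{zz}\rangle = \lambda_0^{-2} Q$ as the new Hopf differential and $\langle N, g_{z\bar z}\rangle = 2He^{2u}$, so the mean curvature is exactly the prescribed $H$.

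Part (1) is then an immediate consequence: applying (2) to the extended frame $\hat F$ of $f$, the computation above at $\lambda_0 = 1$ reproduces $\partial_z \sym_1(\hat F) = f_z$ via \eqref{fzandfzbar}, and similarly for $\bar z$. Hence $\sym_1(\hat F) - f$ is locally constant, and the value at $z_0$ fixes it to be $f(z_0) - \sym_1(\hat F(z_0))$, giving \eqref{normalizedsym}. For arbitrary $\lambda_0 \in \SSS^1$, part (2) already delivers a CMC $H$ immersion, with Hopf differential $\lambda_0^{-2}Q$ as read off from $g_z$. The main obstacle is the algebraic identity $[\hat U,e_3] + 2i\lambda \partial_\lambda \hat U = 4iHe^u\lambda^{-1}\bbar 0&1\\0&0 \ebar$: the normalization $-1/(2H)$ in $\sym_\lambda$, the factor of $2$ in $2i\lambda\partial_\lambda$, and the $\lambda^{-1}$ weights in $\hat U$ must conspire precisely to yield the coefficient $-2ie^u\lambda_0^{-1}$ that matches \eqref{fzandfzbar}; getting every sign and factor right in this bookkeeping is the essential computational core of the proof.
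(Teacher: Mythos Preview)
Your proposal is correct and follows essentially the same approach as the paper's own proof, which is a terse sketch that simply says to compute $\hat f^\lambda_z$ and $\hat f^\lambda_{\bar z}$ and then read off the metric, Hopf differential, and mean curvature; you have filled in the details of that computation, in particular the key algebraic identity $[\hat U,e_3]+2i\lambda\partial_\lambda\hat U = 4iHe^u\lambda^{-1}\bbar 0&1\\0&0\ebar$. One minor remark: your verbal description ``$2i\lambda\partial_\lambda$ kills the lower entry and doubles the upper entry'' is not quite accurate (both off-diagonal entries of $\hat U$ carry $\lambda^{-1}$, so $2i\lambda\partial_\lambda$ multiplies each by $-2i$; it is only after adding the bracket term that the lower entry cancels), but the stated identity and the rest of the argument are correct.
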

\begin{proof}
For item (\ref{symthm1}), set $\hat f^\lambda : =  \sym_{\lambda}(\hat F)$
then compute that $\hat f^1_z=f_z$ and $\hat f^1_{\bar z}=f_{\bar z}$, 
so $f$ and $\hat f^1$ are the same surface up to translation. The formula
(\ref{normalizedsym}) follows immediately.
For other values of $\lambda$, see item (\ref{symthm2}). 
To prove (\ref{symthm2}), one computes $\hat{f}^{\lambda_0}_z$ and $\hat{f}^{\lambda_0}_{\bar{z}}$, 
and then the metric, the Hopf differential and the mean curvature.
Item (\ref{symthm3}) of the theorem is obvious.
\end{proof}


\subsection{The DPW construction}\label{dpwmethodsect}
Let $\uc$ denote the group of loops in $G^\C = SL(2,\C)$ with
the twisting described above. Let $\Lambda^{+}G_\sigma^\C$ denote
the subgroup of loops which extend holomorphically to the unit disc.
For the purpose of normalizations, we also use the subgroup
\bdm
 \ustar := \{ B \in \Lambda^+G^\C_\sigma ~|~ B(0) = \tiny{\bbar \rho & 0 \\ 0 & \rho^{-1} \ebar}, ~\rho \in \real, ~\rho>0 \}.
\edm

In order to describe the DPW method, we need the following standard
loop group decomposition, which allows one to write a $G^\C$-valued
loop as a product of a $G$-valued loop and a loop which extends
 holomorphically
to the unit disc.

\begin{theorem} \label{iwasawathm}
\cite{PreS, DorPW} (The Iwasawa Decomposition). Any element $g$ of $\uc$ can be uniquely expressed as a product
\bdm
g = FB, \hspace{2cm} F \in \uu, \hspace{1cm} B \in \ustar. 
\edm
The factors $F$ and $B$  depend real analytically on $g$.
\end{theorem}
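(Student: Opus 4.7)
My plan is to split the theorem into three parts: uniqueness of the factorization, existence, and real-analytic dependence, with existence being by far the hardest. The overall strategy is to realize the product map $\uu \times \ustar \to \uc$ as a real-analytic bijection.

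For uniqueness, it suffices to check that $\uu \cap \ustar = \{I\}$, for then $g = F_1 B_1 = F_2 B_2$ forces $F_2^{-1} F_1 = B_2 B_1^{-1} \in \uu \cap \ustar = \{I\}$. Take any $h$ in this intersection. As an element of $\ustar$, $h$ extends holomorphically inside the unit disc with $h(0)$ a real positive diagonal matrix. As an element of $\uu$, $h(\lambda) \in SU(2)$ for $|\lambda|=1$, so $\lambda \mapsto h(1/\bar\lambda)^{-*}$ provides a holomorphic extension to the exterior disc that agrees with $h$ on $\SSS^1$. Patching across $\SSS^1$ gives a holomorphic map $\chat \to SL(2,\C)$, which is constant by Liouville, and the normalization at $\lambda=0$ forces that constant to be $I$.

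The hard part is existence, and the approach I would take is the Grassmannian method of Pressley--Segal. Work in the Hilbert space $H := L^2(\SSS^1, \C^2)$ with its polarization $H = H_+ \oplus H_-$ by nonnegative versus negative Fourier modes. Each $g \in \uc$ acts as a bounded invertible multiplication operator on $H$, and $W := g\,H_+$ is a closed subspace whose orthogonal projection onto $H_+$ is Fredholm of index zero. The crux is to show this projection is actually an \emph{isomorphism} (so $W$ lies in the ``big cell'' of the restricted Grassmannian); for $G=SU(2)$, which is compact, connected and simply connected, one can push this to hold on all of $\uc$. Given this, one extracts a unitary loop $F \in \uu$ carrying the standard orthonormal frame of $H_+$ to an orthonormal frame of $W$, so that $F^{-1} g$ preserves $H_+$ and is holomorphic in $|\lambda|<1$; hence $F^{-1}g \in \Lambda^+ G^\C_\sigma$. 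A diagonal $SU(2)$ rotation finally adjusts the value at $\lambda = 0$ to lie in $\ustar$. The main technical obstacle is precisely the big-cell property, which is where genuine functional-analytic input beyond elementary complex analysis is needed.

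The $\sigma$-twisting passes to the factors automatically by uniqueness: if $g$ is $\sigma$-invariant and $g = FB$, then $g = \sigma(g) = \sigma(F)\sigma(B)$ is another Iwasawa decomposition, forcing $\sigma(F) = F$ and $\sigma(B) = B$. Real-analytic dependence of $(F,B)$ on $g$ then follows from the implicit function theorem applied to the real-analytic multiplication $\mu : \uu \times \ustar \to \uc$, whose differential at each point realizes the vector space splitting $\Lambda \mathfrak{sl}(2,\C)_\sigma = \Lambda \mathfrak{su}(2)_\sigma \oplus \mathrm{Lie}(\ustar)$, an isomorphism in the appropriate loop topology.
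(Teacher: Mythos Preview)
The paper does not prove this theorem at all: it is stated with citations to \cite{PreS} and \cite{DorPW} and then used as a black box. So there is no ``paper's own proof'' to compare against; your proposal is essentially a sketch of the argument one finds in those references, particularly the Pressley--Segal Grassmannian construction of the loop-group Iwasawa decomposition, specialized to $G=SU(2)$.

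As a sketch your outline is sound. The uniqueness argument via Liouville is clean and correct. The twisting-passes-to-factors observation is also correct and standard. For existence, your Grassmannian picture is the right one, though the phrasing is slightly off: the ``big cell'' language really belongs to the Birkhoff decomposition, not Iwasawa. What you actually need here is that for every $g \in \uc$ the subspace $W = gH_+$ admits an orthonormal basis obtained from the standard basis of $H_+$ by the action of a \emph{unitary multiplication operator}, i.e.\ an element of $\uu$; this follows because $W$ lies in the connected component of $H_+$ in the restricted Grassmannian and is $\lambda$-shift invariant. Unlike Birkhoff, the Iwasawa factorization for compact $G$ is genuinely global with no cell stratification, so there is no ``big-cell obstacle'' to overcome --- the surjectivity of $\uu \times \ustar \to \uc$ holds everywhere once the Grassmannian model is set up. Your real-analyticity argument via the implicit function theorem and the Lie-algebra splitting is the standard one and is fine.
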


We can now state the theorem of
 Dorfmeister, Pedit and Wu which is fundamental to what follows.
An explicit example is given below.

\begin{theorem}  \cite{DorPW} \cite{DorH:cyl} \label{dpwthm}
 (Generalized Weierstrass representation for 
CMC surfaces in $\E^3$).
Let 
\beqas
\hat \xi = \sum_{i=-1}^\infty A_i \lambda^i \dd z 
~~ \in ~ Lie(\uc) \otimes \Omega ^{1,0} (\Sigma), \\
A_{-1} = {\small{\bbar 0 & a_{-1} \\ b_{-1} & 0 \ebar}}, \hspace{1cm}
 a_{-1} \textup{ non-vanishing},
\eeqas
be a holomorphic 1-form over a simply-connected Riemann surface 
$\Sigma$. 

 Let 
$\hat \Phi :\Sigma \to \uc$ be 
a solution of 
\bdm
\hat \Phi^{-1} d\hat \Phi=\hat \xi.
\edm  
Consider the unique decomposition obtained from
applying Theorem \ref{iwasawathm}  pointwise on $\Sigma$: 
\beq \label{thmsplit}
\hat \Phi = \hat F \hat B, \hspace{1.5cm} \hat F: \Sigma \to \uu, \hspace{.5cm}
 \hat B : \Sigma \to  \ustar.
\eeq
  Then for any $\lambda_0 \in \SSS^1$, the map
 $\sym_{\lambda_0} (\hat F):  \Sigma \to \E^3$
 given by
the Sym-Bobenko formula \eqref{symformula}, is a conformal CMC $H$ immersion.

Conversely, let $\Sigma$ be a noncompact Riemann surface.  Then 
any nonminimal conformal CMC immersion 
from $\Sigma$ into $\E^3$ can be constructed in this manner, 
using a holomorphic potential $\hat \xi$ that is well-defined on $\Sigma$.   
\end{theorem}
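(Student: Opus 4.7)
The plan is to handle the two directions separately, with the forward direction being essentially a computation with the Iwasawa splitting combined with the reality/twisting conditions, and the converse relying on the solvability of a $\bar\partial$-problem on a non-compact Riemann surface.

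For the forward direction, I would start from the Iwasawa decomposition $\hat \Phi = \hat F \hat B$ and compute
\[
\hat \alpha := \hat F^{-1} d\hat F = \hat B\, \hat\xi\, \hat B^{-1} - d\hat B \cdot \hat B^{-1}.
\]
Writing $\hat\alpha = \hat U\, dz + \hat V\, d\bar z$, one observes that since $\hat B$ and $d\hat B$ are $\Lambda^+$-valued in $\lambda$ and $\hat\xi$ is a $(1,0)$-form whose Laurent expansion starts at $\lambda^{-1}$, the expansion of $\hat U$ in $\lambda$ starts at $\lambda^{-1}$, while the expansion of $\hat V$ starts at $\lambda^0$. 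On the other hand, $\hat F$ is $\uu$-valued, so $\hat\alpha$ lies in $Lie(\uu)$; this reality condition forces $\overline{\hat V(1/\bar\lambda)}{}^{T} = -\hat U(\lambda)$ (up to the twisting involution), which truncates $\hat U$ to the range $\lambda^{-1},\lambda^0$ and $\hat V$ to $\lambda^0,\lambda^1$. The twisting (diagonal even, off-diagonal odd in $\lambda$) then puts the diagonal at $\lambda^0$ and the off-diagonals at $\lambda^{\pm 1}$.

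At this point $\hat U$ and $\hat V$ already have the matrix shape of \eqref{withlambda}; one reads off the identifications using $\hat B(0) = \mathrm{diag}(\rho,\rho^{-1})$ and the assumption that $a_{-1}$ is non-vanishing, which ensures the off-diagonal $\lambda^{-1}$ entry of $\hat U$ (namely $\rho^2 a_{-1}$) is nowhere zero; this defines a positive real $e^u$ via $-He^u = \rho^2 a_{-1}$ and a holomorphic $Q$, after checking from the Maurer-Cartan equation for $\hat \alpha$ that the diagonal entry is of the form $u_z/2$ for a real function $u$. One then invokes Theorem~\ref{symthm}(\ref{symthm2}) to conclude that $\sym_{\lambda_0}(\hat F)$ is a conformal CMC $H$ immersion.

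For the converse, I would take a non-minimal conformal CMC immersion $f : \Sigma \to \E^3$, form its extended frame $\hat F:\Sigma \to \uu$ as in Definition~\ref{extendedframedef} (which is well-defined on $\Sigma$ directly when $\Sigma$ is non-compact, by the triviality of $G$-bundles), and search for $\hat B : \Sigma \to \uhat$ such that $\hat \Phi := \hat F \hat B$ satisfies $\bar\partial \hat \Phi = 0$. Expanding, this amounts to the linear $\bar\partial$-problem
\[
\bar\partial \hat B = -\hat V\, \hat B\, d\bar z,
\]
with $\hat V$ the $(0,1)$-coefficient of $\hat F^{-1}d\hat F$. The main obstacle of the whole proof lies here: one needs a solution with values in $\uhat$ defined on all of $\Sigma$. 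This is the point at which one invokes the non-compactness of $\Sigma$ together with Grauert's theorem on triviality of holomorphic principal bundles over non-compact Riemann surfaces, which yields the existence of such $\hat B$. Once $\hat \Phi$ is obtained, $\hat\xi = \hat\Phi^{-1} d\hat\Phi$ is automatically a holomorphic $(1,0)$-form, and its $\lambda$-expansion starts at $\lambda^{-1}$ with non-vanishing upper-right entry, because the corresponding entry of $\hat F^{-1} d\hat F$ is $-He^u$, which is nowhere zero.
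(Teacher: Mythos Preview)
Your forward direction follows the same strategy the paper sketches immediately after the theorem statement: from $\hat\Phi=\hat F\hat B$ deduce, using the $\lambda$-expansion of $\hat\xi$ together with the reality and twisting conditions on $\hat F$, that $\hat F^{-1}d\hat F$ has the shape \eqref{dpwmcf}, and then conclude via the Sym--Bobenko formula. One detail is off, however. You write ``this defines a positive real $e^u$ via $-He^u=\rho^2 a_{-1}$,'' but $a_{-1}$ is an arbitrary non-vanishing holomorphic function and $\rho>0$ is real, so $\rho^2 a_{-1}$ is in general complex; your subsequent claim that the Maurer--Cartan equation forces the diagonal entry to be $u_z/2$ for a \emph{real} $u$ does not rescue this. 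The paper handles exactly this point: it post-multiplies $\hat F$ by a $\lambda$-independent diagonal unitary matrix, which by Theorem~\ref{symthm}(\ref{symthm3}) leaves $\sym_\lambda(\hat F)$ unchanged, so as to make the upper-right $\lambda^{-1}$ entry $a\lambda_0^{-1}$ real and positive, and only then writes $a\lambda_0^{-1}=\tfrac12 e^u$ and compares with \eqref{fzandfzbar}. You should insert the same gauge step before invoking Theorem~\ref{symthm}(\ref{symthm2}).

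For the converse the paper gives no argument of its own; it simply cites \cite{DorPW} and \cite{DorH:cyl}. Your sketch via the $\bar\partial$-equation $\bar\partial\hat B=-\hat V\hat B\,d\bar z$ and Grauert's theorem on holomorphic triviality over non-compact Riemann surfaces is indeed the approach used in those references, so nothing is at odds with the paper here.
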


To prove  the first direction in
 Theorem \ref{dpwthm}, one can show, using the fact that $\hat \xi = \lambda^{-1} A_{-1} + ...$ and that $\hat F$ is unitary, that we can 
 write
 \beq  \label{dpwmcf}
 \hat F^{-1} \dd \hat F = \bbar  c & a \lambda^{-1} \\ b \lambda^{-1} & -c \ebar \dd z  + \bbar  -\bar c & -\bar b \lambda \\ - \bar a \lambda & \bar c \ebar \dd \bar z.
\eeq
  Setting $f = \sym_{\lambda_0} {\hat F}$, one then
 computes that
 \bdm
 \hat F^{-1} f_z \hat F = \bbar 0 & -4ia\lambda_0^{-1}\\0&0\ebar,
 \hspace{1cm} \hat F^{-1} f_{\bar z} \hat F =
 \bbar 0 & 0 \\ -4i \bar a \lambda_0 & 0 \ebar.
\edm
 Post-multiplying the frame $\hat F$ by a diagonal matrix which is
independent of $\lambda$ (and therefore does not change $f$),
we can assume that $a \lambda_0^{-1}$ is real and positive, and write 
$a\lambda_0^{-1}= \bar a \lambda_0 = \frac{1}{2}e^{u}$.
Then, comparing $\hat F^{-1} f_z \hat F$ and $\hat F^{-1} f_{\bar z} \hat F$
with (\ref{fzandfzbar}), it  follows that $f$ is conformally immersed
and $\hat F$ is the coordinate frame defined in Section \ref{su2framesect}.
Moreover, the above expression (\ref{dpwmcf}) for the Maurer-Cartan 
form of $\hat F$ means that $\hat F$ satisfies the requirements of
Theorem \ref{symthm} to be the frame for a CMC surface.
 The condition $a_{-1} \neq 0$ is
the regularity condition, and it is also true that the surface has 
umbilics at points where $b_{-1}=0$.

The map $\hat \Phi$ above is called a \emph{holomorphic extended frame for $f$}.
Note that a holomorphic extended frame is by no means unique: however,
if we allow  \emph{meromorphic} frames, then there is a unique frame where
$\hat \xi$ is of the form $A_{-1} \lambda^{-1} \dd z$.

\begin{example}\label{exa:cylinders} A cylinder.  
Let 
\bdm
\hat \xi = {\small{\bbar  0 & \lambda^{-1} dz \\ \lambda^{-1} dz & 0\ebar}},
\edm
 on $\Sigma = \C$.  Then one solution $\hat \Phi$ of
 $d\hat \Phi = \hat \Phi \hat \xi$ is 
\bdm
\hat \Phi = 
\exp \Big\{ \bbar 0 &  z \lambda^{-1} \\ z \lambda^{-1} &0 \ebar \Big\}, 
\edm
which has the Iwasawa splitting
$\hat \Phi = \hat F \cdot \hat B$,
where
\bdm
\hat F = \exp \Big\{ 
\bbar  0 & z \lambda^{-1} - \bar z \lambda \\ z \lambda^{-1} - \bar z \lambda & 0\ebar \Big\}, \hspace{1cm} 
\hat B = \exp  \Big\{ \bbar 0 & \bar z\lambda \\ \bar z\lambda & 0 \ebar \Big\} ,
\edm
take values in $\uu$ and $\uhat$ respectively.  
The Sym-Bobenko formula $\sym_1 (\hat F)$ gives the surface 
\bdm
\frac{-1}{2H} \cdot 
[ 4 x, \, \sin(4 y), \,\cos(4 y) ],
\edm
in $\E^3 = \{ [x_1,x_2,x_0] := 
x_1e_1+x_2e_2+x_0e_3 \}$.  
\end{example}


\section{Solution of the Bj\"orling problem via the DPW method}  \label{cauchyprobsec}
We are now ready to consider Problem \ref{problem1}.
We are given a real analytic function $f_0: J \to \E^3$, which we want to extend to a conformally immersed CMC
surface $f :  \Sigma \to \E^3$, where $\Sigma$ is some open 
subset of $\C$ containing the set $J \times \{0\}$, which we also denote by $J$.  Such an extension is not unique, but we are also given the tangent plane to the surface along the image of $J$, in the form of a regular
 real analytic unit 
vector field $v: J \to \E^3$, such that $\langle v,  \frac{\dd f_0}{\dd x}\rangle = 0$. The 
required surface is to be tangent to the plane spanned by
 $\frac{\dd f_0}{\dd x}$ and $v$
along the curve.

  If  an extension exists, then we can choose an extended frame $\hat F: \Sigma \to \Lambda G_\sigma$, as described above,
such that $f$ is given by the Sym-Bobenko formula $\sym_1(\hat F)$.
We will construct $\hat F$ (and hence $f$) from the boundary data
given by $f_0$,  $v$.

The idea is  that it will be enough to find the Maurer-Cartan
form of $F$ in terms of the matrices $U$ and $V$ in (\ref{UhatandVhat})
\emph{only on the interval $J$}.  Then we can insert the parameter $\lambda$
as in (\ref{withlambda}), integrate this to find an expression, $\hat F_0$,
 for $\hat F$ along
$J$. Then, it turns out,  the holomorphic extension of this will be a
 holomorphic extended frame $\hat \Phi$ for the surface we seek.

 Examining the 
expression (\ref{UhatandVhat}) for $U$ and $V$, namely:
\bdm
U = F^{-1}F_z = \frac{1}{2} \begin{pmatrix} u_z & -2 H e^u \\ 
                                   Q e^{-u}  & -u_z \end{pmatrix} 
, \hspace{.5cm} V = F^{-1} F_{\bar z} =\frac{1}{2} \begin{pmatrix} -u_{\bar z} & - \bar Q e^{-u}\\ 
                            2  H e^u & u_{\bar z} \end{pmatrix} 
, \edm
we see that, in order to insert the parameter $\lambda$, and hence integrate 
to find the extended frame, it is necessary  and
sufficient to find 
the three functions:
\bdm
u,  \,\,\,\, \frac{\dd u}{\dd z}, \,\,\,\, \textup{and} \,\,\, \, Q
\edm
 along $J$, and so this is the
first goal.

The data $\frac{\dd f_0}{\dd x}$
and $v$ give us an $SU(2)$-frame along $J$ as
described in 
Section \ref{su2framesect}. Since we seek a conformal immersion, and $v$
is orthogonal to $\frac{\dd f_0}{\dd x}$,
we require that $\frac{\partial f}{\partial y} = 2e^u v$ along $J$,
and our standard frame from (\ref{framedef}) is determined 
(up to a sign), by
\bdm
F_0 e_1 F_0^{-1} = \frac{1}{2} {e^{-u}  \frac{\dd f_0}{\dd x}}, \hspace{1cm} 
F_0 e_2 F_0^{-1} = v,
\edm
where $u$ is yet to be determined. 
We can choose  coordinates 
for $\E^3$ such that $\frac{\dd f_0}{\dd x}(x_0)$
and $v(x_0)$ point in the directions of $e_1$ and $e_2$ respectively,
so that $F_0(x_0) = I$.

Now, by definition of $F_0$, it is necessary that
\bdm
\frac{\dd f_0}{\dd x} = 2 e^u F_0 \bbar 0 & -i \\ -i & 0 \ebar F_0^{-1},
\edm
and, taking the determinant, we obtain the  formula:

\beq \label{uexpression}
u =  \ln \Big(\frac{1}{2} \sqrt{\det (\frac{\dd f_0}{\dd x})} \Big),
\eeq
which can also be deduced by the requirement that
 $\langle \frac{\dd f_0}{\dd x}, \frac{\dd f_0}{\dd x} \rangle = 4e^{2u}$.
 
Now differentiating our frame $F_0$ along $J$
with respect to the parameter $x$, we can write
\beq  \label{Fhatmc}
F_0^{-1} (F_0)_x = \bbar  a & b \\ - \bar b & -a \ebar,
\eeq
where both $a$ and $b$ are known functions of $x$, and $a$ is pure imaginary.
According to Lemma \ref{withoutlambda},  the extension $F$ of
$F_0$ away from $J$ must satisfy:
\beqa
 F^{-1} F_x & =& U+V  \nonumber \\ 
     &=& \frac{1}{2} \begin{pmatrix} u_z - u_ {\bar z}& -2 H e^u -\bar Q e^{-u}\\ 
                    Q e^{-u} +2He^u & -u_z +u_{\bar z} \end{pmatrix}, \label{finvfx}
\eeqa
and this must agree along $J$ with the expression (\ref{Fhatmc}) corresponding to $F_0$. The $(1,1)$ components give
\bdm
u_z  - u_{\bar z} =  2a.
\edm
On the other hand, we  have, by definition,
\bdm
u_z  + u_{\bar z} =u_x .
\edm
Adding these equations gives
\beq  \label{uzexpression}
 u_z =  a + \frac{1}{2}u_x,
\eeq
in terms of known functions along $J$.

Now we use the $(1,2)$ components of the matrices above to get
\beq
\label{Qexpression}
 Q = -2e^u ( \bar b + H e^u).
\eeq

We can find the extended frame $\hat F_0$ along $J$ by inserting
these expressions for  $u$, $u_z$ and $Q$ into the restriction of the
1-form given by the equations (\ref{withlambda}) to the real line, namely
\beq \label{alphahatzero}
\hat  \alpha_0 = 
 \frac{1}{2} \left( \begin{pmatrix} 0 & -2 H e^u  \\ 
                                   Q e^{-u}  & 0 \end{pmatrix} \lambda^{-1}
+
  \begin{pmatrix} u_z - u_{\bar z} & 0 \\ 
                                   0  & -u_z +  u_{\bar z}  \end{pmatrix}
+
\begin{pmatrix} 0  & - \bar Q e^{-u}\\ 
                            2  H e^u  & 0  \end{pmatrix} \lambda \right) \dd x, 
\eeq
and then integrating this along $J$ by solving the equation 
$\hat F_0^{-1} \dd \hat  F_0 = \hat \alpha_0$
with the initial condition $\hat F_0(x_0) = I$.

We can now state the main result of this article:

\begin{theorem} \label{mainthm}
 Let $\hat F_0 : J \to \uu$,
be the extended frame along $J$ constructed above.
  Then
\begin{enumerate}
\item
There exists an open set $\Sigma \subset \C$ containing $J$, and
a holomorphic  map $\hat \Phi:  \Sigma \to \uc$ such that the restriction
 $\hat \Phi |_J$ of  $\hat \Phi$ to $J$ is equal to   $\hat F_0$.

\item
The Maurer-Cartan form of $\hat \Phi$ has a Fourier expansion in $\lambda$:
\bdm
\hat \Phi^{-1} \dd \hat \Phi = \sum_{i=-1}^1 A_i \lambda^i \dd z, 
\hspace{1cm} [A_{-1}]_{11} \neq 0.
\edm

\item 
The surface $f:\Sigma \to \E^3$ obtained from $\hat \Phi$ via 
the pointwise Iwasawa decomposition $\hat \Phi = \hat F \hat B$, with 
$\hat F \in \uu$ and $\hat B \in \ustar$, followed by
the Sym-Bobenko formula:
\beq  
f(x,y) =  \mathcal{S}_1 (\hat F(z))
  -\mathcal{S}_1(\hat F(z_0))
  +  f_0(x_0),
\eeq
is of constant mean curvature $H$, 
restricts to $f_0$ along $J$, and is tangent along $J$ to the plane spanned by
$\frac{\dd f_0}{\dd x}$ and $v$.

\item
The surface $f$ so constructed is the unique solution to 
Problem \ref{problem1}, in the following sense: if $\tilde f$ is any other solution, then,
for every point $x_0 \in J$, there exists a neighbourhood
 $\mathcal{N} = (x_0 -\epsilon, x_0+ \epsilon) \times (-\delta, \delta) \subset \C$ of $z_0 = (x_0, 0)$ such that $f |_{\mathcal{N}}= \tilde f |_{\mathcal{N}}$.

\end{enumerate}

\end{theorem}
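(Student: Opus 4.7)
My plan is to exploit the real-analyticity of the Bj\"orling data to holomorphically extend the frame $\hat F_0$ off the curve, obtain a holomorphic potential for the resulting surface, and then rely on uniqueness of analytic extensions to establish the final part. For part (1), note that $u|_J$ is real analytic by (\ref{uexpression}), and the $SU(2)$ frame $F_0$ is determined algebraically by the real analytic orthonormal vectors $\tfrac{1}{2}e^{-u}(f_0)_x$ and $v$, hence is real analytic. The functions $a=[F_0^{-1}(F_0)_x]_{11}$, $b=[F_0^{-1}(F_0)_x]_{12}$ are then real analytic, so (\ref{uzexpression})--(\ref{Qexpression}) imply that $u_z|_J$ and $Q|_J$ are real analytic. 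Consequently the three matrices $A_{-1},A_0,A_1$ (the coefficients of $\lambda^{-1},\lambda^0,\lambda^1$ in $\hat\alpha_0$) extend, coefficient-wise, to holomorphic matrix-valued functions $\check A_i(z)$ on some open neighbourhood $\Sigma\subset\C$ of $J$. Then $\hat\xi=(\check A_{-1}\lambda^{-1}+\check A_0+\check A_1\lambda)\,\dd z$ is a holomorphic $(1,0)$-form on $\Sigma$, and the ODE $\hat\Phi^{-1}\dd\hat\Phi=\hat\xi$ with $\hat\Phi(x_0)=I$ yields a holomorphic $\hat\Phi:\Sigma\to\uc$. Its restriction to $J$ satisfies the same initial value problem as $\hat F_0$, so the two agree.

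Part (2) is then immediate from the construction, since the off-diagonal $(1,2)$-entry of $A_{-1}$ equals $-H\check e^{\check u}$, which does not vanish because $H\neq0$ and $e^u>0$ along $J$. For part (3) the essential observation is that $\hat\alpha_0$ takes values in $\mathrm{Lie}(\uu)$ for $\lambda\in\SSS^1$: the diagonal term $u_z-u_{\bar z}=-iu_y$ is imaginary, and the two off-diagonal entries $-2He^u\lambda^{-1}-\bar Q e^{-u}\lambda$ and $Qe^{-u}\lambda^{-1}+2He^u\lambda$ are negatives of each other's complex conjugates when $|\lambda|=1$. Hence $\hat F_0:J\to\uu$ already lies in the unitary loop group, so its pointwise Iwasawa splitting is trivial: on $J$ the decomposition $\hat\Phi=\hat F\hat B$ forces $\hat F=\hat F_0$ and $\hat B=I$. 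Applying Theorem \ref{dpwthm} to $\hat\Phi$, the map $f=\sym_1(\hat F)$ is a conformal CMC $H$ immersion with $f(z_0)=f_0(x_0)$. Using (\ref{fzandfzbar}) along $J$, together with $F_0e_1F_0^{-1}=\tfrac12 e^{-u}(f_0)_x$, one gets $f_x|_J=(f_0)_x$, and hence $f|_J=f_0$. The condition $F_0 e_2 F_0^{-1}=v$ likewise yields $f_y|_J=2e^u v$, so the tangent plane is spanned by $(f_0)_x$ and $v$.

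For part (4), let $\tilde f$ be another solution and fix $x_0\in J$. Choose the $SU(2)$ frame $\tilde F$ of $\tilde f$ normalized by $\tilde F(x_0)=I$ and construct its extended frame $\hat{\tilde F}$. The data $\tilde u,\tilde u_z,\tilde Q$ restricted to $J$ are determined by $\tilde f|_J$ and $\tilde F|_J$, both of which depend only on $f_0$ and $v$ via (\ref{uexpression})--(\ref{Qexpression}); hence they agree with $u,u_z,Q$ on $J$, and $\hat{\tilde F}|_J$ satisfies the same ODE as $\hat F_0$, giving $\hat{\tilde F}|_J=\hat F_0$. By the converse direction of Theorem \ref{dpwthm}, a holomorphic extended frame $\hat{\tilde\Phi}=\hat{\tilde F}\hat{\tilde B}$ of $\tilde f$ exists on a neighbourhood of $z_0$; right-multiplying by a constant element of $\ustar$ we may normalize $\hat{\tilde\Phi}(x_0)=I$, whence $\hat{\tilde B}|_{x_0}=I$ as well. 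Since $\hat{\tilde\Phi}$ and $\hat\Phi$ are both holomorphic on a common neighbourhood and both restrict to $\hat F_0$ on $J$, the identity theorem forces $\hat{\tilde\Phi}=\hat\Phi$ there. The pointwise Iwasawa decomposition then produces the same $\hat F$, and the Sym-Bobenko formula produces the same surface locally.

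The main obstacle will be the interface between the real and holomorphic pictures, i.e.\ verifying carefully that the pointwise Iwasawa splitting of $\hat\Phi$ collapses to the identity along $J$ in part (3), and, in part (4), handling the $\Lambda_P^+ G_\sigma^\C$-gauge freedom so that the two holomorphic frames $\hat\Phi$ and $\hat{\tilde\Phi}$ can legitimately be identified on a common neighbourhood. Both hinge on the observation that $u,u_z,Q$ along $J$ are completely determined by the Bj\"orling pair $(f_0,v)$ together with the orthogonality and conformality conditions, which is essentially the content of the calculations leading to (\ref{uexpression})--(\ref{Qexpression}); once this is in hand, the remaining steps are formal consequences of the DPW theorem and standard uniqueness of analytic continuation.
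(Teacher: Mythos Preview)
Your treatment of parts (1)--(3) is essentially the same as the paper's: extend the real-analytic coefficient matrices of $\hat\alpha_0$ holomorphically, integrate with initial condition $I$ at $z_0$ to obtain $\hat\Phi$, note that the $(1,2)$-entry of $A_{-1}$ is $-He^{\check u}\neq 0$, and observe that on $J$ one has $\hat\Phi = \hat F_0 \in \uu$, so the Iwasawa splitting there is $\hat F_0\cdot I$ and the Sym--Bobenko formula reproduces $f_0$ and the prescribed tangent plane.

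Part (4), however, contains a genuine gap. You correctly establish $\hat{\tilde F}|_J = \hat F_0$, since the Bj\"orling data determine $u$, $u_z$, $Q$ along $J$. But you then assert that the holomorphic extended frame $\hat{\tilde\Phi}$ furnished by the converse direction of Theorem~\ref{dpwthm} also restricts to $\hat F_0$ on $J$. This does not follow: the relation $\hat{\tilde\Phi} = \hat{\tilde F}\,\hat{\tilde B}$ only gives $\hat{\tilde\Phi}|_J = \hat F_0 \cdot \hat{\tilde B}|_J$, and normalizing at the single point $x_0$ forces $\hat{\tilde B}(x_0)=I$ but says nothing about $\hat{\tilde B}$ elsewhere on $J$. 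The holomorphic frame supplied by the DPW converse is only determined up to right multiplication by an arbitrary holomorphic $\ustar$-valued map, so there is no reason it should equal $\hat F_0$ on all of $J$, and the identity theorem cannot be applied as you do. You flag exactly this gauge freedom as the ``main obstacle'' in your closing paragraph, but the resolution you offer---that $u,u_z,Q$ on $J$ are determined by the data---only pins down $\hat{\tilde F}|_J$, not $\hat{\tilde\Phi}|_J$.

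The paper closes this gap by passing to the \emph{normalized} potential: apply the Birkhoff decomposition $\hat F_0 = \hat F_0^-\,\hat F_0^+$ with $\hat F_0^-(\lambda=\infty)=I$. The factor $\hat F_0^-$ is then \emph{uniquely} determined by $\hat F_0$, depends real-analytically on $x$, and has Maurer--Cartan form of the simple type $A_{-1}(x)\lambda^{-1}\,\dd x$, whose holomorphic extension is the unique normalized potential of the surface. Since any two solutions of the Bj\"orling problem share the same $\hat F_0$ on $J$, they share the same $\hat F_0^-$, hence the same normalized potential, hence (locally) the same surface. The key device your argument is missing is a canonical choice of holomorphic frame that kills the $\ustar$-gauge ambiguity; the Birkhoff normalization provides exactly that.
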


\begin{proof}
Item (1) and (2):  $\hat F_0$ is obtained by solving the equation
$\hat F_0 ^{-1} \dd \hat F_0 = \hat \alpha_0 = (\hat U + \hat V)dx$, with the initial condition
$\hat F(x_0) = I$.   Now $\hat \alpha_0$ is of the form (\ref{alphahatzero}).
By construction, the components of the three coefficient matrices of
 $\hat \alpha_0$  are all real analytic
along $J$.  Hence there is some open set $\Sigma \subset \C$, containing $J$, to
which they simultaneously  extend holomorphically.  Since the component
$[( \hat \alpha_0)_{-1}]_{11} = - H e^u$ is non-vanishing on $J$, we can arrange,
by choosing $\Sigma$ sufficiently small,
that this also holds for the holomorphic extension.  Substituting these holomorphic
extensions for their counterparts,
 and $\dd z$ for $\dd x$, into the expression above for $\hat  \alpha_0$
gives a holomorphic extension $\hat \alpha$ of $\hat \alpha_0$. 
Since $\hat \alpha$ has trace zero and is twisted, this holomorphic 1-form
takes values in the Lie algebra $Lie (\uc)$.  We can choose 
$\Sigma$ to be contractible,  and then the equation 
$\hat \Phi ^{-1} \dd \hat \Phi$, $\hat \Phi (z_0) = I$
 can be solved uniquely to obtain the required map
$\hat \Phi : \Sigma \to \uc$.\\

Item (3): that the CMC surface $f: \Sigma \to \E^3$ exists is assured by
Theorem \ref{dpwthm}, since $\hat \Phi ^{-1} \dd \hat \Phi$ has the required
form.  Now the surface $f$ is obtained by taking the unique Iwasawa decomposition
\bdm
\hat \Phi = \hat F \hat B,
\edm
where $\hat F \in \uu$, and $B \in \ustar$, and applying the Sym-Bobenko formula to $\hat F$.
Since   $\hat \Phi |_J = \hat F_0$, which takes values in $\uu$, it follows
that the splitting along $J$ is just $\hat \Phi = \hat F_0 \cdot I$.
In other words,  $\hat F |_J = \hat F_0$. Hence 
 $\sym_1 (\hat F |_J) = \sym_1 (\hat F_0)$, and this is shown to be
equal to $f_0$ by a computation, as in the proof of Theorem \ref{symthm}.
The fact that $f$ is tangent along $J$ to the plane spanned by 
$\frac{\dd f_0}{\dd x}$ and $v$ is built into the construction of
the frame $F_0$ along $J$.\\

Item (4): for uniqueness, it is enough to show locally that any two
CMC $H$ surfaces which are equal and tangent along a part of a curve
are the same surface.  This can be done by a maximal principle, or
can also be seen from the construction of $\hat F_0$ given here.
About any point $x_0 \in J$, we have given a canonical means to construct
a unique extended frame  $\hat F_0$ along $J$, with  $\hat F_0 (x_0) = I$.
Now use the Birkhoff decomposition \cite{PreS} of $\uc$ to write
\bdm
\hat F_0 (x) = \hat F^-_0 \cdot \hat F^+_0,
\edm
where $\hat F^+_0 \in \Lambda^+ G^\C_\sigma$, and $\hat F^-_0$ is a loop which 
extends holomorphically to the exterior of the unit disc in the 
Riemann sphere and is normalized
so that $\hat F^-_0 (\lambda = \infty) = I$. This can be done pointwise
on an open subset of $J$ containing $x_0$ because $\hat F_0(x_0)$ is in the
big cell. Then
 $\hat F^-_0$ is
uniquely determined by $\hat F_0$, depends real analytically on $x$ (see
\cite{DorPW}), and, it is straightforward to verify,
has a Maurer-Cartan form of a very simple form:
\bdm
(\hat F^-_0) ^{-1} \dd \hat F^-_0 = \bbar 0 & a_0 \\ b_0 & 0 \ebar \lambda^{-1} \dd x,
\hspace{1cm} a_0 \neq 0.
\edm
The real analytic functions $a_0$ and $b_0$ have unique holomorphic
 extensions $a$ and $b$ to some neighbourhood 
of $(x_0, 0)$ in $\C$, and putting these into the 1-form
\beq \label{normalized}
\hat \xi =  \bbar 0 & a \\ b & 0 \ebar \lambda^{-1} \dd z,
\eeq
 we see that
we have a potential for a CMC surface, as in Theorem \ref{dpwthm}.

On the other hand, if we are given two surfaces which solve the Bj\"orling
problem, we could just as well have constructed the extended frame for 
each of the two surfaces on some neighbourhood of the point
$z_0 = (x_0, 0)$ in $\C$, rather than restricting to the real line.
For each surface we  obtain a \emph{unique} map  $\tilde F^-$,
with $\tilde F^-(\lambda=\infty) = I$, and a Maurer-Cartan form
of the form (\ref{normalized}).  One can verify that this so-called 
\emph{normalized potential} is holomorphic, and, since the corresponding
holomorphic functions $\tilde a$ and $\tilde b$ agree, by construction,
 with $a_0$ and $b_0$ along $J$, it follows that they agree everywhere
and  the surface constructed from $\hat \xi$ is the original surface.
Hence the two original surfaces are the same.

\end{proof}

\begin{defn} The holomorphic extension $\hat \alpha$ of $\hat \alpha_0$
defined in the proof above will be called the \emph{boundary potential} for
the CMC surface in question.
\end{defn}


\subsection{Example}  \label{delaunayexample}
As a test case, 
we compute the solution when the the initial curve is a circle in a plane,
and the tangent plane along the curve is orthogonal to this plane.

We take the circle
\bdm
 f_ 0(x) = [\sin 2x, 0 , -\cos 2x] = \bbar -i \cos 2x & -i \sin 2x \\
               -i \sin 2x & i \cos 2x \ebar,
\edm
and the vector field
\bdm
v(y)= [0,1,0] = \bbar 0 & 1 \\ -1 & 0 \ebar.
\edm

Then $\frac{\dd f_0}{\dd x} (x) = 2i \bbar \sin 2x & -\cos 2x \\ -\cos 2x & -\sin 2x \ebar$, and using the expression (\ref{uexpression})
 we  must have
 \bdm
 u =  \ln \Big(\frac{1}{2} \sqrt{\det (\frac{\dd f_0}{\dd x})} \Big) = 0.
 \edm
 
To find $a$ and $b$ in $F_0^{-1} (F_0)_x$, 
 we need to convert the vector fields $\frac{\dd f_0}{\dd x} $ and $v$ into an 
$SU(2)$-frame  $F_0(x)$. The vector fields in question  are orthogonal, so, we
look for a conformal immersion with coordinates $z=x + iy$ and such that
$f_y$ is in the direction of $v$.  The frame according to the recipe is determined
 by 
\beq  \label{framesolve}
F_0 e_1 F_0^{-1} = \frac{1}{2}e^{-u} \frac{\dd f_0}{\dd x}, \hspace{1cm} 
F_0 e_2 F_0^{-1} =   v.
\eeq
Setting $F_0 = \small{\bbar A & B \\ - \bar B & \bar A \ebar}$, the first of
these two equations gives
\bdm
2 \Re (A \bar B) = -\sin 2x, \hspace{2cm} A^2-B^2 = \cos 2x,
\edm
and the second equation gives
\bdm
 \Im (A \bar B) = 0, 
 \hspace{2cm} A^2 + B^2 = 1.
\edm
The unique solution that satisfies $F_0(0) = I$ is the $SU(2)$-frame
\bdm
F_0 (x) = \bbar \cos x & -\sin x \\ \sin x & \cos x \ebar.
\edm

Now we equate
\bdm
F_0^{-1}  (F_0)_x = \bbar 0 & -1 \\ 1 & 0 \ebar = \bbar a & b \\ - \bar b & -a \ebar,
\edm
so that $a=0$ and $b=-1$.

Substituting into equations (\ref{uzexpression}) and (\ref{Qexpression}) we have,
along the real axis,
\bdm
u_z = 0, \hspace{1cm} Q = 2(1-H),
\edm
and the extended frame $\hat F_0$ along the $x$-axis is computed by integrating
the Maurer-Cartan form
\bdm
\hat \alpha_0 = \frac{1}{2} \left( \bbar 0 & -2H \\ 2(1-H) & 0 \ebar \lambda^{-1} 
    +  \bbar 0 & - 2(1-H) \\ 2H & 0 \ebar \lambda \right) \dd x.
\edm
Hence the boundary potential for the surface given by Theorem \ref{mainthm} is
\bdm
\hat \alpha (z) = \bbar 0 &  ~(H-1) \lambda - H \lambda^{-1} \\
     H \lambda - (H-1) \lambda^{-1} ~ & 0 \ebar \dd z.
\edm

When $H \neq 1$, this holomorphic potential satisfies 
the conditions to be that of a Delaunay surface 
(see \cite{kilian}, where the fact that the rotation parameter is $iy$ rather than
$x$ introduces a minus sign in the lower left corner).  One obtains a sphere when $H=1$, a cylinder when
 $H=\frac{1}{2}$, and unduloids and nodoids for other values of $H$.

\subsection{Two parameter families of CMC surfaces}
Observe that in the previous example, if we simply drop the $\frac{1}{H}$ term
in front of the Sym-Bobenko formula, we actually obtain a one-parameter family 
of CMC 1 surfaces, which deforms a sphere (minus two points) of radius $1$ 
continuously through a family of Delaunay surfaces to a cylinder of radius $\frac{1}{2}$.
Thus we see that if we are given a sphere as our initial object, and the circle in
the sphere, then we obtain a 1-parameter family of CMC-1 surfaces, by going
through the Bj\"orling construction starting with this circle, the tangent plane
to the sphere, inserting $H$ (now thought of as just a real parameter) into the
expression  (\ref{Qexpression}) for $Q$, constructing the boundary potential, 
and finally using the Sym-Bobenko formula without the $\frac{1}{H}$ factor.
Clearly we can do this for any surface and any given curve in the surface, so we have
the following corollary of Theorem \ref{mainthm}:
\begin{theorem} \label{famthm}
Let $\Sigma \subset \C$ be a simply connected domain, containing the
origin, and with coordinates $z=x+iy$. Suppose given a conformal  CMC 1 immersion $f: \Sigma \to E^3$, 
with metric given by $4e^{2u}(dx^2 + dy^2)$. Let $\Sigma^\prime \subset \Sigma$ be  any
simply connected open subset to which the functions $u_0(x) := u(x,0)$
and $\eta_0(x) := -i u_y(x,0) = (u_z - u_{\bar z})(x,0)$ extend
holomorphically, and denote the holomorphic extensions by $\check u_0$
and $\check \eta_0$.
 Then there exists a continuous
1-parameter family of conformal CMC 1 immersions $f^t: \Sigma^\prime\to \E^3$, with
Hopf differential given by
\bdm
Q_t = 2(1-t)e^{2\check u_0} + Q,
\edm
and such that $f^1 = f|_{\Sigma^\prime}$. The surfaces are related on the
real coordinate axis by the relation
\bdm
f^t(x,0) = t f^1(x,0).
\edm
The boundary potential for $f^t$ is
\bdm
\hat  \alpha^t = 
   \begin{pmatrix} \frac{1}{2} \check \eta _0 & - t e^{\check u_0} \lambda^{-1} - ((1-t)e^{\check u_0} + \frac{1}{2} \bar Q e^{-\check u_0}) \lambda \\ 
                                   ((1-t)e^{\check u_0} + \frac{1}{2}Q e^{-\check u_0})\lambda^{-1} +  t e^{\check u_0} \lambda  & - \frac{1}{2}\check \eta_0 \end{pmatrix}   \dd z. 
\edm
\end{theorem}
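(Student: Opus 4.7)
The plan is to apply Theorem~\ref{mainthm} to Björling data extracted from $f$ along the real axis, but with the mean curvature treated as a free parameter $t$ rather than being fixed at $1$, and then to rescale by $t$ to return to CMC~1. Concretely, along $J := \Sigma' \cap \R$, read off the Björling data $f_0 := f|_J$ and $v := F e_2 F^{-1}|_{y=0}$, where $F$ is the $SU(2)$-frame of $f$; by the uniqueness statement of Theorem~\ref{mainthm} these data recover $f$ itself near $J$. The formulas (\ref{uexpression}) and (\ref{uzexpression}) for $u|_J$ and $u_z|_J$ depend only on the Björling data, not on the mean curvature, and hence agree with the restrictions of $u$ and $u_z$ from $f$. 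Their holomorphic extensions to $\Sigma'$ are $\check u_0$ and $(\check u_0)_z$, and $(u_z - u_{\bar z})|_J$ extends to $\check\eta_0$.

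Only equation (\ref{Qexpression}) is sensitive to $H$. Replacing $H=1$ by a free parameter $t$ and using the $t=1$ identity $Q|_J = -2e^u(\bar b + e^u)$ to eliminate the Björling quantity $\bar b$ yields
\[
Q^{(t)}|_J \;=\; -2e^u(\bar b + t e^u) \;=\; Q|_J + 2(1-t)e^{2u|_J}.
\]
Substituting $u|_J$, $u_z|_J$, $Q^{(t)}|_J$, and $H=t$ into (\ref{alphahatzero}), and holomorphically extending using $\check u_0,\check\eta_0$, the already-holomorphic $Q$, and the holomorphic extension of $\overline{Q|_J}$ across $\Sigma'$ (possibly after mildly shrinking $\Sigma'$), gives, after collecting terms, precisely the displayed formula for $\hat\alpha^t$. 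Theorem~\ref{mainthm} then provides a conformal CMC $t$ immersion $\tilde f^{(t)}:\Sigma'\to\E^3$ with boundary potential $\hat\alpha^t$ and Hopf coefficient $Q_t = Q + 2(1-t)e^{2\check u_0}$. Define $f^t := t\,\tilde f^{(t)}$; equivalently, apply the Sym–Bobenko formula to the extended frame of $\tilde f^{(t)}$ with the $\tfrac{1}{2H}$ prefactor replaced by $\tfrac12$. Since rescaling a CMC $t$ immersion by $t$ produces a CMC 1 immersion, $f^t$ is a conformal CMC 1 surface on $\Sigma'$. Along $J$ the scaling gives $f^t(x,0) = t f_0(x) = t f^1(x,0)$, and at $t=1$ the uniqueness statement of Theorem~\ref{mainthm} yields $f^1 = f|_{\Sigma'}$. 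Continuity in $t$ is immediate: $\hat\alpha^t$ depends polynomially on $t$, and the Iwasawa decomposition is real-analytic in its data (Theorem~\ref{iwasawathm}).

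The principal subtlety is the bookkeeping between the CMC $t$ intermediate $\tilde f^{(t)}$ and its CMC~1 rescaling $f^t$: they share the same extended frame and boundary potential but carry different conformal factors (respectively $\check u_0$ and $\check u_0 + \ln t$ on $J$) and different mean curvatures in the common $z$ coordinate. The $He^u$ entries of $\hat\alpha^t$ reflect the CMC $t$ normalization, and the Hopf-differential identification $Q_t = Q + 2(1-t)e^{2\check u_0}$ is most cleanly read off the $\lambda^{-1}$-entry of $\hat\alpha^t$ under that normalization; forcing the CMC 1 viewpoint onto the frame data would obscure the identification with a spurious scale factor. The only other point to check—existence of the holomorphic extension of $\overline{Q|_J}$ across $\Sigma'$—is automatic since $Q$ is holomorphic on $\Sigma$, so $\overline{Q|_J}$ is real-analytic on $J$, and $\Sigma'$ can be shrunk if needed.
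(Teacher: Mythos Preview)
Your proposal is correct and follows essentially the same approach as the paper's own proof: extract the Bj\"orling data from $f$ along the real axis, substitute the free parameter $t$ for $H$ in (\ref{Qexpression}) to obtain $Q_t = 2(1-t)e^{2\check u_0}+Q$, assemble the boundary potential via (\ref{alphahatzero}), apply Theorem~\ref{mainthm}, and rescale by $t$ to pass from CMC~$t$ to CMC~1. Your treatment is in fact more thorough than the paper's terse argument, particularly in making explicit the continuity in $t$ and in flagging the bookkeeping distinction between the CMC~$t$ intermediate surface and its CMC~1 rescaling.
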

\begin{proof}
The proof is a matter of going through the construction above for the solution of 
the Bj\"orling problem, starting with  $f_0 = f(x,0)$. We have, for
equation (\ref{finvfx}) 
\bdm
 F^{-1} F_x 
     = \frac{1}{2} \begin{pmatrix} u_z - u_ {\bar z}& -2  e^u -\bar Q e^{-u}\\ 
                    Q e^{-u} +2e^u & -u_z +u_{\bar z} \end{pmatrix},
 \edm
 where $Q$ and $u$ are those of the given surface $f$ along the real axis,
 so that $b=-  e^u -\frac{1}{2}\bar Q e^{-u}$, and substituting this into the 
 expression (\ref{Qexpression}), and the parameter $t$ instead of $H$, we
 obtain the above expression for $Q_t$.  Finally, we construct the surface
 from Theorem \ref{mainthm}, and scale the result by a factor $t$, so that
 our surface is CMC 1, rather than CMC $t$.
 \end{proof}
 
Note that this can be done for any open curve in the coordinate
domain of a surface, by changing conformal coordinates so that this curve becomes a part of  the $x$-axis.


\section{Applications to boundary value problems and surfaces with symmetries}  \label{applications}
By a result of F M\"uller (Theorem 5 of \cite{muller2002}),  given a conformally parameterized 
CMC surface with boundary, which is continuous at the boundary, and where the boundary curve in $\E^3$ is an embedded real analytic curve, the 
surface can be extended analytically across the boundary.  Therefore, for such boundary curves, we may always assume that the boundary is contained in the surface,
and have the possibility of treating it as a Bj\"orling problem, by considering all possible 
tangent planes along this curve.

\subsection{CMC surfaces which contain a line segment}
In this section we use the boundary potential to describe all simply connected
CMC surfaces which contain
a given line or line segment.
\begin{theorem} \label{linethm}
Let $J= (\alpha, \beta)$ be an open interval, where we allow $\alpha = -\infty$ and $\beta= \infty$.
Let $l= (2 \alpha, 2 \beta) \times \{0\} \times \{0\} \subset \E^3$.
\begin{enumerate}
\item
Given a real analytic function $\theta_0 : J  \to \real$, with
$\theta_0(x_0) = 0$ for some $x_0 \in J$,
let $\Sigma$ be any simply connected domain in $\C$, containing $J \times \{0\}$, to 
which $\frac{\dd \theta_0}{\dd x}$ extends analytically. Denote this
analytic extension by $\check \theta_x$.
 Then, for any real $H \neq 0$, there is a conformally parameterized CMC $H$ surface $f :\Sigma \to E^3$ which 
maps $(\alpha,\beta) \times \{0\} \subset \C$ to $l$, via the map $(x,0) \mapsto [2x,0,0]$. The Hopf differential of this surface is given by $Q \dd z$, where 
\bdm
Q = - i \check \theta_x - 2 H.
\edm
The boundary potential of the surface is given by
\bdm
\hat \alpha =  \bbar 0 & -H \lambda^{-1} + (- \frac{1}{2} i \check \theta_x +H) \lambda \\
(-\frac{1}{2} i \check \theta_x -H)\lambda^{-1} + H \lambda \ebar \dd z.
\edm
\item Conversely, any simply connected non-minimal CMC surface in $\E^{3}$ which contains
the segment $l$, can be represented, on an open subset containing  $l$,
 this way.  For each $x \in J$, the value
$\theta_0(x) \mod 2\pi$ is the
angle between the normal to the surface at the point $[2x,0,0]$ and some fixed
line in the plane spanned by $e_2$ and $e_3$.
\end{enumerate}
\end{theorem}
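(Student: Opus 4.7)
The plan is to deduce both parts from a direct application of Theorem \ref{mainthm}. For part (1), I take as Bj\"orling data
\bdm
f_0(x)=[2x,0,0],\qquad v(x)=\cos\theta_0(x)\,e_2+\sin\theta_0(x)\,e_3,
\edm
on $J$, so that $\langle v,f_0^\prime\rangle=0$ holds identically, and I place the origin of $\E^3$ at $[2x_0,0,0]$ to achieve $F_0(x_0)=I$. Since $f_0^\prime=2e_1$ has length $2$, formula (\ref{uexpression}) gives $u\equiv 0$ along $J$. The first relation in (\ref{framesolve}) forces $F_0 e_1 F_0^{-1}=e_1$, hence $F_0(x)=\exp(\phi(x)e_1)$ for a real analytic function $\phi$; a short computation of $\Ad(\exp(\phi e_1))$ on $e_2$ then shows that the prescribed $v$ corresponds precisely to the identification $\theta_0=2\phi$.

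Differentiating yields $F_0^{-1}(F_0)_x=\tfrac12\theta_{0,x}\,e_1$, matching (\ref{Fhatmc}) with $a=0$ and $b=-\tfrac{i}{2}\theta_{0,x}$; then (\ref{uzexpression}) and (\ref{Qexpression}) give $u_z=0$ and $Q=-i\theta_{0,x}-2H$ along $J$, so $\bar Q=i\theta_{0,x}-2H$ there. Substituting $u$, $u_z$, $Q$, $\bar Q$ into (\ref{alphahatzero}), collecting powers of $\lambda$, and replacing $\theta_{0,x}$ by the holomorphic extension $\check\theta_x$ and $\dd x$ by $\dd z$ reproduces exactly the boundary potential $\hat\alpha$ in the statement. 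Theorem \ref{mainthm} then supplies the required domain $\Sigma$ and a CMC $H$ immersion $f\colon\Sigma\to\E^3$ extending $(x,0)\mapsto[2x,0,0]$ with Hopf differential $Q\,\dd z^2=(-i\check\theta_x-2H)\,\dd z^2$, proving part (1).

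For the converse in part (2), let $\tilde f$ be any simply connected non-minimal CMC $H$ immersion containing $l$. By F.\ M\"uller's theorem \cite{muller2002}, $\tilde f$ extends analytically across $l$; after choosing a conformal parameterization near $l$, the preimage of $l$ is a real analytic arc. A biholomorphic change of conformal coordinates, combined with an ambient rigid motion and a rescaling, straightens this arc to a subinterval of the real axis, arranges the restriction there to be $(x,0)\mapsto[2x,0,0]$, and normalizes $F_0(x_0)=I$. On this subinterval $u\equiv 0$ and $F_0$ commutes with $e_1$, so $F_0(x)=\exp(\phi(x)e_1)$ for a unique real analytic $\phi$ with $\phi(x_0)=0$; setting $\theta_0:=2\phi$, the Bj\"orling data of $\tilde f$ along $l$ matches the data used in part (1), and the uniqueness clause of Theorem \ref{mainthm} forces $\tilde f$ to coincide with the surface built from $\theta_0$ on a neighborhood of $l$. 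The angle interpretation is then immediate, since $N=F_0 e_3 F_0^{-1}=-\sin\theta_0\,e_2+\cos\theta_0\,e_3$ makes angle $\theta_0\pmod{2\pi}$ with the line $\R e_3$ in the $(e_2,e_3)$-plane. The one delicate step I foresee is the coordinate normalization in part (2): producing a single biholomorphic chart in which the preimage of $l$ is the real axis and the restriction is the standard parameterization. This is a holomorphic reparameterization argument relying on the real analyticity granted by M\"uller's extension, and it is the only non-algorithmic ingredient.
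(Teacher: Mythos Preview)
Your proof is correct and follows essentially the same route as the paper's: both specialize the Bj\"orling construction of Theorem~\ref{mainthm} to $f_0(x)=[2x,0,0]$ with $v=[0,\cos\theta_0,\sin\theta_0]$, compute $u=u_z=0$, $a=0$, $b=-\tfrac{i}{2}\theta_{0,x}$, read off $Q$ and $\hat\alpha$ from (\ref{Qexpression}) and (\ref{alphahatzero}), and handle the converse via the uniqueness clause after a conformal coordinate change sending the preimage of $l$ to the real axis with the normalization $(x,0)\mapsto[2x,0,0]$. Two minor remarks: M\"uller's extension theorem is not needed in part~(2), since by hypothesis $l$ already lies in the interior of the surface rather than on its boundary, and your word ``rescaling'' must be read as a biholomorphic reparameterization of the domain (which is what the argument requires) rather than an ambient dilation of $\E^3$, since the latter would change $H$.
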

\begin{proof}
Item 1:
This amounts to interpreting the function $\theta_0$ as 
the vector field $v$ for the Bj\"orling problem for the map
 $f_0: J \to \E^3$,
\bdm
f_0(x) = [2x,0,0] = \bbar 0 & -2ix \\ -2i x & 0 \ebar.
\edm
Since $f_0$ is always tangent to the $x_1$-axis, the map 
 $v: J \to E^3$ determined by $\theta_0$ via: 
\bdm 
v(x) = [0, \cos \theta_0, \sin \theta_0] = \bbar i \sin \theta_0 & \cos \theta_0 \\
         -\cos \theta_0  & - i \sin \theta_0 \ebar,
\edm
is orthogonal to $\frac{\dd f_0}{\dd x}$ for all $x$.
The normalization $\theta_0(x_0) = 0$ is equivalent to choosing 
$v(x_0) = e_2 = \bbar 0 & 1 \\ -1 & 0\ebar$, which will give us our standard 
normalization of the frame: $F_0(x_0) = I$.

According to equation $\ref{uexpression}$, we will have
$u = \ln (\frac{1}{2} \sqrt{\det \frac{\dd f_0}{\dd x}}) = 0$,
and solving the equations (\ref{framesolve}), we obtain the unique $SU(2)$ frame
mapping $x_0$ to the identity:
\bdm
F_0 = \bbar \cos \frac{\theta_0}{2} & -i \sin \frac{\theta_0}{2} \\ -i \sin \frac{\theta_0}{2} & \cos \frac{\theta_0}{2} \ebar.
\edm
Hence
\bdm
F_0^{-1} (F_0)_x = -\frac{i}{2} \frac{\dd \theta_0}{\dd x} \bbar 0 & 1  \\ 1 & 0 \ebar,
\edm 
and  the formulae (\ref{uexpression}), (\ref{uzexpression}) and (\ref{Qexpression})
are
\bdm
u=u_z=0, \hspace {1cm} Q = -i \frac{\dd \theta_0}{\dd x} - 2 H,
\edm
which, extending holomorphically, gives the expression for the Hopf differential above.  Finally, substituting these into the expression (\ref{alphahatzero}) for 
$\hat \alpha_0$, and extending holomorphically, we obtain the above 
expression for the boundary potential $\hat \alpha$.

Item 2: For the converse, on an open set containing $l$, one can always 
choose conformal coordinates $(x,y)$ such that $f$  maps 
$J \times \{0\} \to l$ by the function $f((x,0)) = [2x,0,0]$. 
Fix a point $[2x_0,0,0] \in l$ and change coordinates
of $\E^3$ so that $\frac{\partial f}{\partial y}(x_0,0)$ is in the $e_2$ direction.
Then the frame $F_0$ will be given as above, where $\theta_0$ is the angle
between the normal direction and our fixed choice of 
$e_3$ direction. By Theorem \ref{mainthm},
 a non-minimal CMC
surface is determined by its Bj\"orling data, so the Hopf differential and 
boundary potential  stated above, are those of the original surface.  

\end{proof}


\begin{figure}   
\begin{center}
\includegraphics[height=28mm]{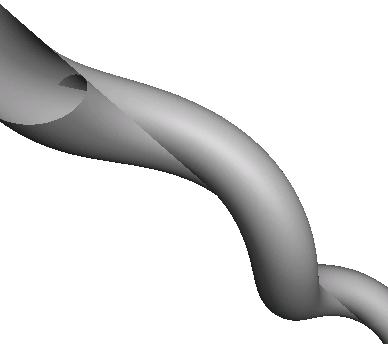} \hspace{4mm}
\includegraphics[height=28mm]{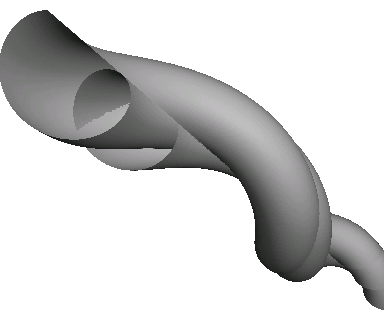} \hspace{4mm}
\includegraphics[height=28mm]{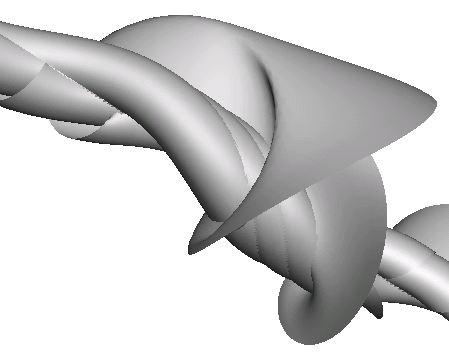}
\end{center}
\caption{Three partial plots of one  surface, containing a line $l$.
The surface normal along the line $l$ 
rotates around $l$ at constant speed with respect to the arc-length 
parameter of $l$.}
\label{twistyfig}
\end{figure}


\subsection{Examples containing a straight line}
If we choose $\theta_0$ to be constant, we of course get a cylinder, with
boundary potential:
\bdm
\hat \alpha = 
  \bbar 0 & -H \lambda^{-1}  +H \lambda \\
-H\lambda^{-1} + H \lambda & 0 \ebar \dd z.
\edm
To check that this potential really does give a cylinder, observe that 
the holomorphic extended frame obtained by integrating $\hat \alpha$ is 
$\hat \Phi(z) = \exp((-H \lambda^{-1}  +H \lambda) z A)$, where $A= \tiny \bbar 0 & 1 \\ 1 & 0 \ebar$.
Since this can be written as $\hat \Phi(z) = \exp(-H \lambda^{-1} z A) \cdot
\exp(H \lambda z A)$ and the second matrix is in $\uhat$, the second factor has no effect on the term
$F$ in the 
Iwasawa decomposition $\hat \Phi = F B$, $F \in \uu$, $B \in \uhat$. Hence the surface  obtained
from this potential is the same as the one obtained from the potential $\xi = -H \lambda^{-1} A \dd z$,
which was shown to be a cylinder in Example \ref{exa:cylinders}.

If we choose $\theta_0 = 2x$, to obtain a surface the normal to which rotates about
the line in a spiral (Figure \ref{twistyfig}), the boundary potential is 
\bdm
\hat \alpha =  \bbar 0 & -H \lambda^{-1} + (- i  +H) \lambda \\
(- i-H)\lambda^{-1} + H \lambda & 0 \ebar \dd z.
\edm


\begin{figure}  
\begin{center}
\includegraphics[height=35mm]{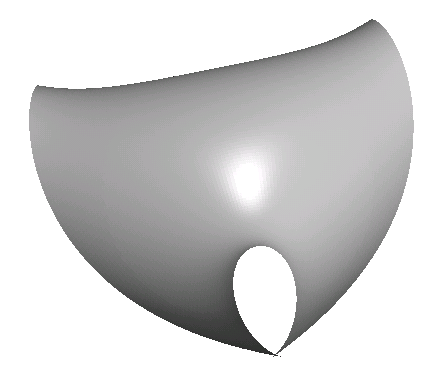} \hspace{4mm}
\includegraphics[height=35mm]{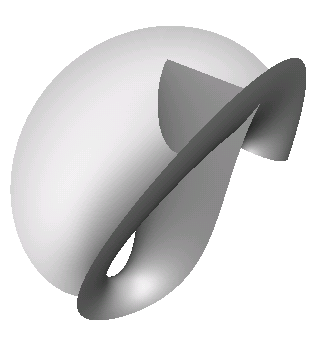}  \hspace{4mm}
\includegraphics[height=35mm]{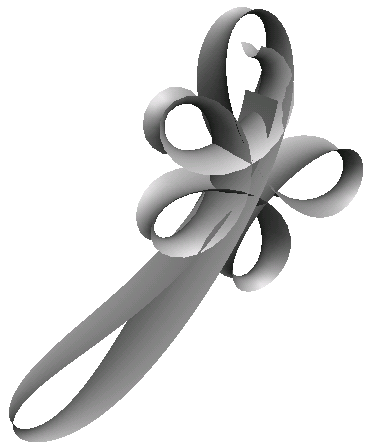}\\
\end{center}
\caption{Three partial plots of a  surface, containing a line $l$, 
the normal to which rotates with constantly increasing angular velocity around $l$.
It is conformally parameterized by an immersion $f: \C \to \E^3$, which maps the real line
to $l$. It has  exactly one umbilic at $z=i$, around the spot of white light
on the first image.  The last two  images show the image of a narrow strip around the positive imaginary-axis. The image of a narrow strip around the 
positive real axis is shown in Figure \ref{colorfig} (center).}
\label{acceleratingtwistfig}
\end{figure}

If we choose $\theta_0 = x^2$, to obtain a surface the normal to which rotates 
with constantly increasing angular velocity about
the line (Figure \ref{acceleratingtwistfig}), the boundary potential is 
\bdm
\hat \alpha =  \bbar 0 & -H \lambda^{-1} + (- i z  +H) \lambda \\
(- i z -H)\lambda^{-1} + H \lambda & 0 \ebar \dd z.
\edm





If we choose $\theta_0 = \frac{\pi}{8} \sin^2(x)$, then we  obtain a surface 
the normal to which maintains a small and periodic angle with the $x_3$ direction,
along the line $l$ (Figure \ref{colorfig}, first image). The boundary potential is 
\bdm
\hat \alpha =  \bbar 0 & -H \lambda^{-1} + (- i \frac{\pi}{8}\cos z \sin z  +H) \lambda \\
(- i  \frac{\pi}{8}\cos z \sin z -H)\lambda^{-1} + H \lambda & 0\ebar \dd z.
\edm

\subsection{Examples of CMC surfaces containing a planar circle}
Using an analogous approach for the circle, we can easily 
construct CMC surfaces containing a circle in a plane.
The surfaces shown in Figures \ref{circlefig1} - \ref{circlefig3} 
all come from boundary potentials of the form
\bdm
\alpha = \frac{1}{2} \bbar \frac{1}{z}(\cos (2 \check \theta) -1) &
  -H \lambda^{-1} + (\sin 2 \check \theta + H - 2 i \check \theta^\prime) \lambda \\
  \frac{1}{z^2}(\sin 2 \check \theta + H + 2 i \check \theta^\prime)\lambda^{-1}
    - \frac{1}{z^2} H \lambda &
     - \frac{1}{z}(\cos (2 \check \theta) -1) \ebar \dd z,
\edm
where $\theta: \real \to \real$ satisfies $\theta(t+2 \pi) = \theta(t) + 2k \pi$
for some integer $k$, and $\check \theta$ is the analytic 
extension of $\theta(-i \ln z)$ to an annulus around $S^1$.  
For Example 1, we used $\theta(t)$, up to a translation,
 proportional to $\sin(t)$; for the other examples
$\theta(t)$ is, also up to a translation, proportional to $\sin^2(k t)$ for some integer $k$.

\begin{figure}
\begin{center}
\includegraphics[height=32mm]{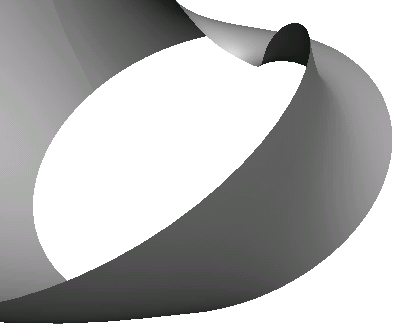} \hspace{1.5cm} 
\includegraphics[height=34mm]{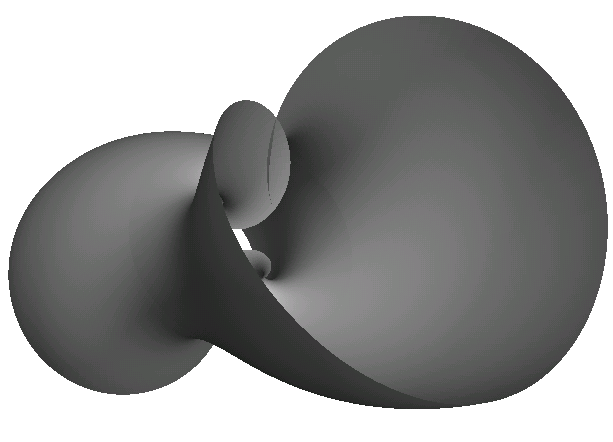}
\end{center}
\caption{This surface contains a planar circle and has a reflective symmetry about the origin.}
\label{circlefig1}

\end{figure}

\begin{figure}
\begin{center}
\includegraphics[height=31mm]{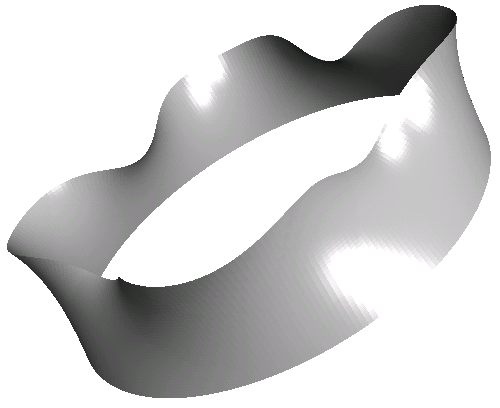}  \hspace{4mm}
\includegraphics[height=32mm]{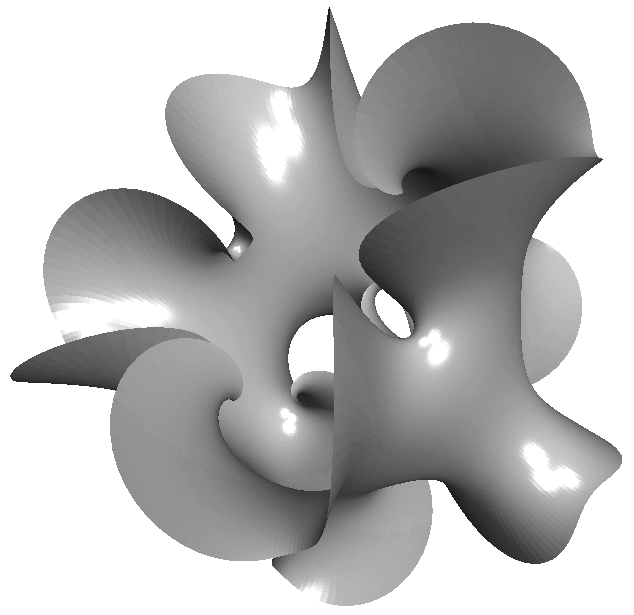}  \hspace{4mm}
\includegraphics[height=32mm]{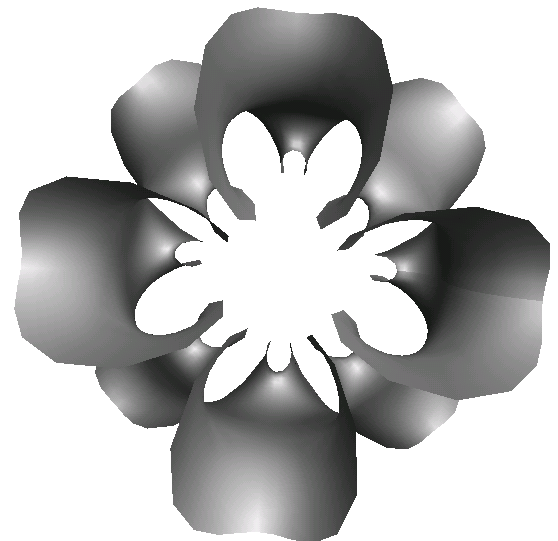} 
\end{center}
\caption{A CMC surface invariant under rotations of $\pi/2$ around the $x_3$-axis.} \label{circlefig2}
\end{figure}

\begin{figure}
\begin{center}
\includegraphics[height=38mm]{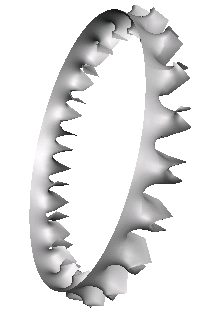} \hspace{25mm} 
\includegraphics[height=38mm]{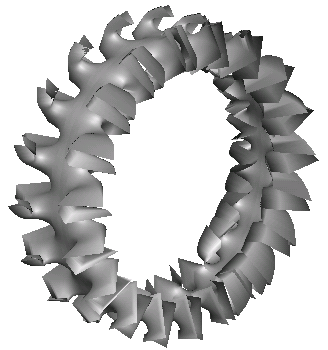}
\end{center}
\caption{A CMC surface containing a planar circle and  which is 
invariant under rotations of $\pi/10$ around the $x_3$-axis.}
\label{circlefig3}
\end{figure}

\pagebreak

\providecommand{\bysame}{\leavevmode\hbox to3em{\hrulefill}\thinspace}
\providecommand{\MR}{\relax\ifhmode\unskip\space\fi MR }
\providecommand{\MRhref}[2]{%
  \href{http://www.ams.org/mathscinet-getitem?mr=#1}{#2}
}
\providecommand{\href}[2]{#2}

\end{document}